 \newtheorem{definition}{Definition} [section]
 \newtheorem{proposition}[definition]{Proposition}       
 \newtheorem{theorem}[definition]{Theorem}       
 \newtheorem{corollary}[definition]{Corollary}       
  \newtheorem{lemma}[definition]{Lemma}
\numberwithin{equation}{section}
\newcommand{\tr}{\rm tr}
\newcommand{\Res}{\mbox{\rm Res}}
\newcommand{\Hom}{\mbox{\rm Hom}}
\def\NN{{\mathbb{N}}}
\def\FF{{\mathbb{F}}}%{{\cal F}}
\def\CC{{\mathbb{C}}}
\def\PP{{{\mathcal P}}}
\renewcommand{\dim}{\mbox{\rm dim}}
\newcommand{\CL}{{\mathbb{CL}}}
\begin{document}
\title[Mackey's criterion and Clifford groups]{Mackey's criterion for subgroup restriction of Kronecker products and harmonic analysis on Clifford groups}
\author{Tullio Ceccherini-Silberstein}
\address{Dipartimento di Ingegneria, Universit\`a del Sannio, C.so
Garibaldi 107, 82100 Benevento, Italy}
\email{tceccher@mat.uniroma1.it}
\author{Fabio Scarabotti}
\address{Dipartimento SBAI, Sapienza Universit\`a  di Roma, via A. Scarpa 8, 00161 Roma, Italy}
\email{fabio.scarabotti@sbai.uniroma1.it}
\author{Filippo Tolli}
\address{Dipartimento di Matematica e Fisica, Universit\`a Roma TRE, L. San Leonardo Murialdo 1, 00146  Roma, Italy}
\email{tolli@mat.uniroma3.it}
\subjclass{20C15, 43A90, 20G40}
\keywords{Representation theory of finite groups, Gelfand pair, Mackey's criterion, Kronecker product, Clifford groups}
\date{\today}

\maketitle 
\begin{abstract}
We present a criterion for multiplicity-freeness of the decomposition of the restriction $\Res^G_H(\rho_1 \otimes \rho_2)$ of the Kronecker product of two generic irreducible representations $\rho_1, \rho_2$ of a finite group $G$ with respect to a subgroup $H \leq G$. This constitutes a generalization of a well known criterion due to Mackey (which corresponds to the case $H = G$). 
The corresponding harmonic analysis is illustated by detailed computations on the Clifford groups
$G=\CL(n)$, together with the subgroups $H=\CL(n-1)$, for $n \geq 1$, which lead to an explicit decomposition of the restriction of Kronecker products.
\end{abstract}

\tableofcontents

%%%%%%%%%%%%%%%%%%%%%%%%%%%%%%%%%%%%%%%%%%%%%%%%%%%%%
%%%%%%%%%%%%%%%%%%%%%%%%%%%%%%%%%%%%%%%%%%%%%%%%%%%%%
\section{Introduction}
%%%%%%%%%%%%%%%%%%%%%%%%%%%%%%%%%%%%%%%%%%%%%%%%%%%%%
Let $G$ be a finite group and let $K \leq G$ be a subgroup.

We denote by $\widehat{G}$ a complete set of pairwise inequivalent irreducible representations of $G$
and say that a $G$-representation is multiplicity-free provided it decomposes as the direct sum of
distinct elements in $\widehat{G}$. 
Let also denote by $L(G)$ the group algebra of $G$ and by $^K\!L(G)^K =\{f \in L(G): f(k_1gk_2) = f(g) \forall g \in G, \forall k_1,k_2 \in K\}$ the subalgebra of {bi}-$K$-invariant functions on $G$.
Recall that $(G,K)$ is a Gelfand pair provided $^K\!L(G)^K$ is commutative.

Our main result is the following criterion for  multiplicity-freeness of the restriction to $H$ of Kronecker products of irreducible representations of $G$ (this is a  generalization of Mackey's criterion corresponding to the case $H = G$).

\begin{theorem}
\label{c:mc1}
Let $G$ be a finite group and $H \leq G$ a subgroup. Consider the subgroup $\widetilde{H}=\{(h,h,h):h\in H\}$ of
$G\times G\times H$. Then the following conditions are equivalent:
\begin{enumerate}[{\rm (i)}]
\item $(G\times G\times H,\widetilde{H})$ is a Gelfand pair;
\item $\Res^G_H(\rho_1\otimes \rho_2)$ is multiplicity-free for all $\rho_1,\rho_2\in \widehat{G}$.
\end{enumerate}
\end{theorem}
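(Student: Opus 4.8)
The plan is to reduce both conditions to a single statement about the dimension of the spaces of $\widetilde H$-fixed vectors in the irreducible representations of $\Gamma := G\times G\times H$, via the standard characterization of Gelfand pairs. First I would recall the basic equivalence: for a finite group $\Gamma$ and a subgroup $K$, the pair $(\Gamma,K)$ is a Gelfand pair (that is, $^K\!L(\Gamma)^K$ is commutative) if and only if $\Ind_K^\Gamma\mathbf 1_K$ is multiplicity-free, and, by Frobenius reciprocity, this holds if and only if $\dim V_\sigma^K\le 1$ for every $\sigma\in\widehat\Gamma$, where $V_\sigma^K$ denotes the subspace of $K$-fixed vectors. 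Applying this with $K=\widetilde H$ will turn condition (i) into a uniform bound on fixed-vector dimensions.

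Next I would parametrize $\widehat\Gamma$. Since $\Gamma=G\times G\times H$ is a direct product, its irreducible representations are exactly the external tensor products $\rho_1\boxtimes\rho_2\boxtimes\theta$ with $\rho_1,\rho_2\in\widehat G$ and $\theta\in\widehat H$, acting on $V_{\rho_1}\otimes V_{\rho_2}\otimes V_\theta$. The central computation is then to evaluate $\dim\bigl(V_{\rho_1}\otimes V_{\rho_2}\otimes V_\theta\bigr)^{\widetilde H}$. Because $\widetilde H\cong H$ acts through $h\mapsto\rho_1(h)\otimes\rho_2(h)\otimes\theta(h)$, and since restriction commutes with tensor products, the restriction of $\rho_1\boxtimes\rho_2\boxtimes\theta$ to $\widetilde H$ is precisely $\Res^G_H(\rho_1\otimes\rho_2)\otimes\theta$ as an $H$-representation. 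Hence, by Frobenius reciprocity applied to the fixed vectors,
$$\dim\bigl(V_{\rho_1}\otimes V_{\rho_2}\otimes V_\theta\bigr)^{\widetilde H}=\langle\mathbf 1_H,\ \Res^G_H(\rho_1\otimes\rho_2)\otimes\theta\rangle_H=\langle\theta^*,\ \Res^G_H(\rho_1\otimes\rho_2)\rangle_H,$$
which is exactly the multiplicity of the dual representation $\theta^*$ in $\Res^G_H(\rho_1\otimes\rho_2)$.

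Finally I would assemble the equivalence. By the first step, (i) holds if and only if the displayed dimension is $\le 1$ for all $\rho_1,\rho_2\in\widehat G$ and all $\theta\in\widehat H$; by the computation this says that every irreducible $\theta^*$ of $H$ occurs in $\Res^G_H(\rho_1\otimes\rho_2)$ with multiplicity at most one. Since $\theta\mapsto\theta^*$ is a bijection of $\widehat H$ onto itself, this is precisely the assertion that $\Res^G_H(\rho_1\otimes\rho_2)$ is multiplicity-free for all $\rho_1,\rho_2\in\widehat G$, namely (ii).

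I expect the only real subtlety, rather than a genuine obstacle, to be the bookkeeping: correctly tracking the dual $\theta^*$ and verifying that the external tensor product $\rho_1\boxtimes\rho_2\boxtimes\theta$ restricted to the triple diagonal $\widetilde H$ yields the internal tensor product $\Res^G_H(\rho_1\otimes\rho_2)\otimes\theta$. Once the multiplicity formula is in place, the passage between "at most one-dimensional fixed space for every $\theta$" and "multiplicity-free restriction of the Kronecker product" is immediate, and the whole argument is a clean specialization of the classical Mackey case (recovered by taking $H=G$, so that $\widetilde H$ is the full diagonal of $G\times G$).
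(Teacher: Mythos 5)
Your proof is correct, but it takes a genuinely different route from the paper's. You settle everything on the fixed-vector side: invoking the equivalence (a) $\Leftrightarrow$ (d) of Theorem \ref{t:GP}, you reduce (i) to the bound $\dim\left(V_1\otimes V_2\otimes W\right)^{\widetilde{H}}\leq 1$ for every irreducible $\rho_1\boxtimes\rho_2\boxtimes\theta$ of $G\times G\times H$, note that the restriction of $\rho_1\boxtimes\rho_2\boxtimes\theta$ to the triple diagonal is the $H$-representation $\Res^G_H(\rho_1\otimes\rho_2)\otimes\theta$, and compute the dimension of the fixed space as $\frac{1}{\lvert H\rvert}\sum_{h\in H}\chi_{\rho_1}(h)\chi_{\rho_2}(h)\chi_\theta(h)$, which is exactly the multiplicity of the conjugate representation $\theta'$ (your $\theta^*$) in $\Res^G_H(\rho_1\otimes\rho_2)$; since $\theta\mapsto\theta'$ is a bijection of $\widehat{H}$, the equivalence with (ii) follows. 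All of these steps check out. The paper instead proves the stronger Theorem \ref{mainthm}: an explicit isometric isomorphism $\Hom_{G\times G\times H}(\rho_1\boxtimes\rho_2\boxtimes\theta,\eta)\cong\Hom_H\left(\Res^G_H(\rho_1\otimes\rho_2),\theta'\right)$, where $\eta$ is the permutation representation on $L(G\times G)$ arising from the action \eqref{e:azione}, and then deduces Theorem \ref{c:mc1} via condition (b) of Theorem \ref{t:GP} (multiplicity-freeness of $\eta$) rather than condition (d). What each approach buys: yours is a few lines of character theory and dispenses with the permutation representation entirely, whereas the paper's explicit intertwiners $T\mapsto\widetilde{T}$, $S\mapsto\widehat{S}$ (together with Proposition \ref{propTB} on invariant antitrilinear forms) are what make the later harmonic analysis concrete --- the isometry \eqref{isomTv} and the spherical characters \eqref{sphchar} for the Clifford groups are computed through them. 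The paper itself acknowledges your shortcut: after evaluating $\psi_{\rho_1\boxtimes\rho_2\boxtimes\theta}(1_G,1_G,1_G)$ it remarks that the multiplicity identity could be recovered from the scalar product of the character of $\theta'$ with that of $\Res^G_H(\rho_1\otimes\rho_2)$, which is precisely the computation your argument runs on.
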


We shall deduce this result from a suitable Frobenius' reciprocity type theorem for permutation representations (Theorem \ref{mainthm}) which is quite interesting on its own, and illustate it by explicit computations on the Clifford groups $\CL(n)$ (see Section \ref{s:clifford}) yielding the following results.

\begin{theorem} 
\label{t:cln-wsr}
\begin{enumerate}[{\rm (i)}]
\item $(\CL(n)\times \CL(n)\times \CL(n), \widetilde{\CL(n)})$ is a Gelfand pair;
\item $(\CL(n)\times \CL(n)\times \CL(n-1), \widetilde{\CL(n-1)})$ is a Gelfand pair if and only if $n$ is odd.
\end{enumerate}
\end{theorem}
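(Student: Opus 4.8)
The plan is to invoke Theorem \ref{c:mc1} with $G=\CL(n)$ and $H=\CL(n)$ (resp. $H=\CL(n-1)$), which reduces statement (i) to the assertion that $\rho_1\otimes\rho_2$ is multiplicity-free for all $\rho_1,\rho_2\in\widehat{\CL(n)}$, and statement (ii) to the assertion that $\Res^{\CL(n)}_{\CL(n-1)}(\rho_1\otimes\rho_2)$ is multiplicity-free for all such pairs. Thus both parts become purely representation-theoretic questions about Kronecker products, which I answer using the explicit description of $\widehat{\CL(n)}$ from Section \ref{s:clifford}: there are $2^n$ linear characters, namely the characters of the abelianization $\CL(n)/\{\pm1\}\cong(\ZZ/2\ZZ)^n$, together with a single faithful irreducible $\sigma$ of dimension $2^{n/2}$ when $n$ is even, and two faithful irreducibles $\sigma_+,\sigma_-$ of dimension $2^{(n-1)/2}$ when $n$ is odd. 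The structural fact I use repeatedly is that the character of every faithful irreducible vanishes off the centre of $\CL(n)$.

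Since a Kronecker product involving at least one linear character is either again a linear character or a twist of a faithful irreducible (hence still irreducible), the only products that can fail to be multiplicity-free are those of the form faithful $\otimes$ faithful; after restriction to $\CL(n-1)$ the same trichotomy applies, because a linear character restricts to a linear character and a faithful irreducible of $\CL(n)$ restricts either irreducibly (if $n$ is odd) or as the sum $\tau_+\oplus\tau_-$ of the two faithful irreducibles of $\CL(n-1)$ (if $n$ is even). For part (i) it therefore suffices to decompose $\sigma\otimes\sigma$ (resp. $\sigma_\epsilon\otimes\sigma_{\epsilon'}$). As $-1$ acts as the identity on such a product, it factors through the abelianization; evaluating its character, which is supported on the centre, identifies $\sigma\otimes\sigma$ with the regular representation of $(\ZZ/2\ZZ)^n$ when $n$ is even, while each $\sigma_\epsilon\otimes\sigma_{\epsilon'}$ is the sum of the $2^{n-1}$ linear characters $\psi$ satisfying $\psi(\bar w)=s$, where $\bar w$ is the image in $(\ZZ/2\ZZ)^n$ of $w=e_1e_2\cdots e_n$ and $s\in\{\pm1\}$ depends on $\epsilon,\epsilon'$ and $n\bmod 4$. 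In every case the product is a sum of distinct linear characters, so part (i) holds for all $n$.

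For part (ii) I track how these linear characters restrict. Writing a linear character of $\CL(n)$ as a vector $(a_1,\dots,a_n)\in\{\pm1\}^n$ via its values on $e_1,\dots,e_n$, restriction to $\CL(n-1)$ is the projection forgetting $a_n$, and $\psi(\bar w)=a_1\cdots a_n$. When $n$ is even, $w$ is not central and $\sigma\otimes\sigma$ carries all $2^n$ characters; since the projection $\{\pm1\}^n\to\{\pm1\}^{n-1}$ is two-to-one, every linear character of $\CL(n-1)$ occurs with multiplicity two in $\Res^{\CL(n)}_{\CL(n-1)}(\sigma\otimes\sigma)$, which is therefore not multiplicity-free. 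When $n$ is odd, $w$ is central and $\sigma_\epsilon\otimes\sigma_{\epsilon'}$ carries only the subset $\{a_1\cdots a_n=s\}$; on it the value of $a_n$ is determined by $a_1,\dots,a_{n-1}$, so the projection restricts to a \emph{bijection} onto $\{\pm1\}^{n-1}$, and $\Res^{\CL(n)}_{\CL(n-1)}(\sigma_\epsilon\otimes\sigma_{\epsilon'})$ is the multiplicity-free regular representation of $\CL(n-1)/\{\pm1\}\cong(\ZZ/2\ZZ)^{n-1}$. Combining the three subcases gives multiplicity-freeness for all pairs exactly when $n$ is odd, which via Theorem \ref{c:mc1} yields (ii).

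The main obstacle is the bookkeeping of the middle paragraph: determining precisely which linear characters occur in a faithful $\otimes$ faithful product. Everything hinges on the centrality of $w=e_1\cdots e_n$, which holds if and only if $n$ is odd; this is exactly what forces the occurring characters to fill all of $(\ZZ/2\ZZ)^n$ (multiplicity two after restriction) in the even case, versus a single index-two coset (multiplicity one after restriction) in the odd case. Verifying the values of the faithful characters on the centre, together with the sign of $w^2$ that fixes the scalar by which the central $w$ acts on $\sigma_\pm$, is the one genuinely computational point, but it is a short calculation with the defining relations of $\CL(n)$.
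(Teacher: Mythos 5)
Your proposal is correct and follows essentially the same route as the paper: both reduce via Theorem \ref{c:mc1} to multiplicity-freeness of (restricted) Kronecker products, dispose of any product involving a linear character by the twisting argument of Lemma \ref{l:filippo} together with the restriction rules \eqref{e:res-simple}--\eqref{e:res-rho}, decompose the faithful-times-faithful products into distinct linear characters using that the faithful characters are supported on the centre, and detect failure or success of multiplicity-freeness after restriction by the two-to-one versus bijective behaviour of forgetting the last coordinate. Your coset condition $\psi(\bar w)=s$ with $s$ depending on $\epsilon,\epsilon'$ and $n \bmod 4$ is exactly the paper's parity conditions on $\lvert A\rvert$ in \eqref{e:tensore2}--\eqref{e:tensore4}, restated via $\chi_A(\gamma_{X_n})=(-1)^{\lvert A\rvert}$, so the two arguments coincide up to notation.
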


In addition, we give a complete harmonic analysis for the Gelfand pair $(\CL(n)\times \CL(n)\times \CL(n), \widetilde{\CL(n)})$ with an explicit description of the associated spherical characters (see Section \ref{ss:clifford-sc})
and of the orbits of $\CL(n)$ acting by conjugation on the Cartesian product $\CL(n) \times \CL(n)$ (see Section \ref{ss:clifford-orbits}).

%%%%%%%%%%%%%%%%%%%%%%%%%%%%%%%%%%%%%%%%%%%%%%%%%%%%%
\section{Preliminaries}\label{Secprel}
%%%%%%%%%%%%%%%%%%%%%%%%%%%%%%%%%%%%%%%%%%%%%%%%%%%%%
%%%%%%%%%%%%%%%%%%%%%%%%%%%%%%%%%%%%%%%%%%%%%%%%%%%%%

In this section, in order to fix notation, we recall some basic facts on linear algebra and representation theory of finite groups.

All vector spaces considered here are complex. Moreover, we shall equip every finite dimensional vector space $V$ with a scalar product denoted by $\langle \cdot, \cdot\rangle_V$ with associated norm $\lVert\cdot \rVert_V$; we usually  omit the subscript if the vector space we are referring to is clear from the context. 
Given two finite dimensional vector spaces $W$ and $U$, we denote by $\Hom(W,U)$ the vector space of all linear maps  from $W$ to $U$, and for $T \in \Hom(W,U)$ we denote by $T^*\in \Hom(U,W)$ the adjoint of $T$. We define a (normalized Hilbert-Schmidt) scalar product on  $\Hom(W,U)$ by setting 
\[
\langle T_1, T_2 \rangle_{\Hom(W,U)} = \frac{1}{\dim W}\tr(T^*_2T_1)
\]
for all $T_1, T_2 \in \Hom(W,U)$, where $\tr(\cdot)$ denotes the trace of linear operators; note that, by
centrality of the trace (so that $\tr(T_2^*T_1) = \tr(T_1T_2^*)$), we have
\begin{equation}\label{PS}
\langle T_1, T_2 \rangle_{\Hom(W,U)}=\frac{\dim U}{\dim W}\langle T_2^*, T_1^* \rangle_{\Hom(U,W)} 
\end{equation}
In particular, the map $T \mapsto\sqrt{\dim U/\dim W}T^*$ is an isometry from $\Hom(W,U)$ onto $\Hom(U,W)$.
Finally, note that denoting by $I_W \colon W\rightarrow W$ the identity operator, we have $\lVert I_W\rVert_{\Hom(W,W)}=1$.\\

Let $G$ be a finite group. A \emph{unitary representation} of $G$ is a pair $(\sigma,W)$ where $W$ is a finite dimensional vector space and $\sigma \colon G \to \Hom(W,W)$ is a group homomorphism such that $\sigma(g)$ is unitary (that is, $\sigma(g)^*\sigma(g) = I_W$) for all $g \in G$. The term ``unitary'' will be omitted. We denote by $d_\sigma = \dim(W)$ the dimension of the representation $(\sigma,W)$.
Let $(\sigma, W)$ be a $G$-representation. A subspace $V \leq W$ is said to be \emph{G-invariant} provided
$\sigma(g)V \subseteq V$ for all $g \in G$. Writing $\sigma\vert_V(g) = \sigma(g)\vert_V$ for all $g \in G$, we
have that $(\sigma\vert_V,V)$ is a $G$-representation, called a \emph{subrepresentation} of $\sigma$.
One says that $\sigma$  is irreducible provided the only $G$-invariant subspaces are trivial (equivalently,
$\sigma$ admits no proper subrepresentations).

Let $(\sigma,W)$ and $(\rho, U)$ be two $G$-representations. 
We denote by $\Hom_G(W,U) = \{T \in \Hom(W,U):  T\sigma(g) = \rho(g)T, \forall g \in G\}$, the space of all {\it intertwining operators}. Observe that if $T\in\Hom_G(W,U)$ then $T^* \in\Hom_G(U,W)$. Indeed, for all $g \in G$ we have
\begin{equation}\label{adjoint}
T^*\rho(g) = T^*\rho(g^{-1})^* =(\rho(g^{-1})T)^* = (T\sigma(g^{-1}))^* = \sigma(g^{-1})^*T^* = \sigma(g)T.
\end{equation}
One says that $(\sigma,W)$ and $(\rho, U)$ are \emph{equivalent}, and we shall write $(\sigma,W) \sim (\rho, U)$
(or simply $\sigma \sim \rho$), if there exists a bijective intertwining operator $T \in \Hom_G(W,U)$.

We denote by $\widehat{G}$ a complete set of pairwise-inequivalent irreducible representations of $G$ (it is well
known (cf. \cite[Theorem 3.9.10]{book}) that there is a bijection between $\widehat{G}$ and the set of conjugacy classes of elements in $G$ so that, in particular, $\widehat{G}$ is finite). Moreover, if $\sigma, \rho\in \widehat{G}$ we set $\delta_{\sigma,\rho}=1$ (resp. $=0$) if $\sigma\sim\rho$ (resp. otherwise).

Suppose that $H \leq G$ is a subgroup. We denote by $(\Res^G_H\sigma,W)$ the
\emph{restriction} of $\sigma$ to $H$, that is, the $H$-representation defined by $[\Res^G_H\sigma](h) =
\sigma(h)$ for all $h \in H$.

The \emph{direct sum} of the two representations $\sigma$ and $\rho$ is the representation 
$(\sigma \oplus \rho, W \oplus U)$ defined by $[(\sigma \oplus \rho)(g)](w,u) = (\sigma(g)w,\rho(g)u)$
for all $g \in G$, $w \in W$ and $u \in U$. 

If $\sigma$ is a subrepresentation of $\rho$, then denoting by $W^\perp = \{u \in U: \langle u,w\rangle_U=0 \ \forall w \in W\}$ the orthogonal complement of $W$ in $U$, we have that $W^\perp$ is a $G$-invariant subspace and
$\rho = \sigma \oplus \rho\vert_{W^\perp}$. From this one deduces that every representation $\rho$ decomposes as a
direct sum of irreducible subrepresentations. In the particular case when these irreducible subrepresentations
are pairwise inequivalent, we say that $\rho$ is \emph{multiplicity-free}.

More generally, when $\sigma$ is equivalent to a subrepresentation of $\rho$, we say that $\sigma$ is \emph{contained} in $\rho$. 

Suppose that $(\sigma, W)$ is irreducible. Then the number $m = \dim \Hom_G(W, U)$ denotes the \emph{multiplicity}
of $\sigma$ in $\rho$. This means that one may decompose $U = U_1 \oplus U_2 \oplus \cdots \oplus U_m \oplus U_{m+1}$ 
with $(\rho\vert_{U_i},U_i) \sim (\sigma, W)$ for all $i=1,2,\ldots,m$. The $G$-invariant subspace
$U_1 \oplus U_2 \oplus \cdots \oplus U_m \leq U$ is called the $W$-{\it isotypic component} of $U$ and is denoted
by $mW$. One also says that $\rho$ (or, equivalently, $U$) contains  $m$ copies of $\sigma$ (resp. of $W$). If this is the case, we say that  $T_1, T_2, \ldots, T_m \in \Hom_G(W,U)$ yield an {\it isometric orthogonal decomposition} of $mW$
if $T_i \in \Hom_G(W,U_i)$ and, in addition,
\begin{equation}
\label{component}
\langle T_iw_1, T_jw_2\rangle_U = \langle w_1, w_2\rangle_W\delta_{i,j}
\end{equation}
for all $w_1, w_2 \in W$ and $i,j=1,2,\ldots,m$.
This implies that the subrepresentation of $U$ isomorphic to $mW$ is equal to the \emph{orthogonal} direct sum
$T_1W\oplus T_2W \oplus \cdots \oplus T_mW = U_1 \oplus U_2 \oplus \cdots \oplus U_m$
and each operator $T_j$ is a isometry from $W$ onto $U_j$.

\begin{lemma}\label{lemma1}
Let $(\sigma,W)$ be an irreducible $G$-representation. Then the operators $T_1, T_2, \ldots, T_m$ yield an isometric orthogonal decomposition of the $W$-isotypic component of $U$ if and only if  $T_1, T_2, \ldots, T_m$  form an orthonormal basis for $\Hom_G(W,U)$. Moreover, if this is the case, we then have:
\begin{equation}\label{orthdeltaij}
T_j^*T_i  = \delta_{i,j}I_W
\end{equation}
for all $i,j=1,2,\ldots,m$.
\end{lemma}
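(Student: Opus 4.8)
The plan is to reduce everything to a single family of scalars obtained from Schur's lemma. The crucial observation is that for any $T_i,T_j\in\Hom_G(W,U)$, relation \eqref{adjoint} gives $T_j^*\in\Hom_G(U,W)$, so the composition $T_j^*T_i$ lies in $\Hom_G(W,W)$. Since $(\sigma,W)$ is irreducible, Schur's lemma forces
\[
T_j^*T_i=\lambda_{ij}I_W
\]
for suitable scalars $\lambda_{ij}\in\mathbb{C}$. I would then express each of the three conditions — the isometric orthogonal decomposition \eqref{component}, the orthonormal basis property, and the identity \eqref{orthdeltaij} — purely in terms of the $\lambda_{ij}$, and show that each is equivalent to $\lambda_{ij}=\delta_{i,j}$.

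First I would reformulate \eqref{component}. Moving one factor across the inner product, $\langle T_iw_1,T_jw_2\rangle_U=\langle T_j^*T_iw_1,w_2\rangle_W=\lambda_{ij}\langle w_1,w_2\rangle_W$. Hence \eqref{component} holds if and only if $\lambda_{ij}=\delta_{i,j}$ for all $i,j$: one implication is immediate, and the converse follows by choosing $w_1=w_2\neq0$ (possible since $W\neq0$) and using non-degeneracy of $\langle\cdot,\cdot\rangle_W$. This already identifies the decomposition condition with the asserted identity \eqref{orthdeltaij}.

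Next I would compute the Hilbert--Schmidt inner product of the $T_i$ using the same scalars. By the definition of $\langle\cdot,\cdot\rangle_{\Hom(W,U)}$ and the normalization by $\tfrac{1}{\dim W}$, we have $\langle T_i,T_j\rangle_{\Hom(W,U)}=\tfrac{1}{\dim W}\tr(T_j^*T_i)=\tfrac{1}{\dim W}\tr(\lambda_{ij}I_W)=\lambda_{ij}$, since $\tr(I_W)=\dim W$. Thus the $T_i$ form an orthonormal system precisely when $\lambda_{ij}=\delta_{i,j}$; and because $m=\dim\Hom_G(W,U)$, any orthonormal system of $m$ vectors is automatically a basis, so ``orthonormal basis for $\Hom_G(W,U)$'' is equivalent to $\lambda_{ij}=\delta_{i,j}$ as well. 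Combining the two reformulations gives the stated equivalence, and the common condition $\lambda_{ij}=\delta_{i,j}$ is exactly \eqref{orthdeltaij}, which proves the ``moreover'' part.

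The argument is essentially routine once the scalars $\lambda_{ij}$ are introduced; the only point requiring genuine attention is that the specific normalization of the Hilbert--Schmidt product is precisely what makes $\langle T_i,T_j\rangle$ equal $\lambda_{ij}$ exactly (rather than up to a factor of $\dim W$), so that orthonormality and \eqref{orthdeltaij} coincide with no rescaling. Beyond careful bookkeeping with adjoints and the non-degeneracy of the two scalar products, I expect no real obstacle.
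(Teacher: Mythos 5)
Your proof is correct and follows essentially the same route as the paper's: apply Schur's lemma to $T_j^*T_i\in\Hom_G(W,W)$ to get scalars $\lambda_{ij}$, then use the normalized trace to identify $\lambda_{ij}$ with $\langle T_i,T_j\rangle_{\Hom(W,U)}$. The only difference is organizational — the paper proves one implication and declares the converse trivial, whereas you make both directions explicit by reducing all three conditions to $\lambda_{ij}=\delta_{i,j}$, which is a slightly cleaner write-up of the identical idea.
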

\begin{proof}
Suppose that $T_1, T_2, \ldots, T_m$  form an orthonormal basis for $\Hom_G(W,U)$.
Taking into account \eqref{adjoint} we have $T_j^*T_i \in \Hom_G(W,W)$. By irreducibility we deduce from Schur's lemma that there exist $\lambda_{i,j} \in \CC$ such that  $T_j^*T_i = \lambda_{i,j}I_W$ for all $i,j=1,2,\ldots,m$.
By taking traces, we get $\delta_{i,j}d_\sigma = \tr(T_j^*T_i) =  \lambda_{i,j} d_\sigma \Rightarrow \lambda_{i,j} = \delta_{i,j}$ and  \eqref{component} and \eqref{orthdeltaij} follow.
The converse implication is trivial.
\end{proof}

We denote by $L(G)$ the group algebra of $G$. This is the vector space of all functions $f \colon G \to \CC$
equipped with the \emph{convolution product} $*$ defined by setting $[f_1 * f_2](g) = \sum_{h \in G} f_1(h)f_2(h^{-1}
g) = \sum_{h \in G} f_1(gh)f_2(h^{-1})$ for all $f_1,f_2 \in L(G)$ and $g \in G$. We shall endow $L(G)$ with the
scalar product $\langle \cdot, \cdot \rangle_{L(G)}$ defined by setting
\begin{equation}
\label{e:scalar-l-g}
\langle f_1, f_2 \rangle_{L(G)} = \sum_{g \in G} f_1(g)\overline{f_2(g)}
\end{equation}
for all $f_1, f_2 \in L(G)$.

Let $(\sigma,W)$ be a representation of $G$ and let $\{w_1,w_2, \ldots, w_{d_\sigma}\}$ be an orthonormal basis of $W$. The corresponding \emph{matrix coefficients} $u_{j,i}\in L(G)$  are defined by setting 
\begin{equation}\label{stellap2}
u_{j,i}(g) = \langle \sigma(g)w_i, w_j\rangle
\end{equation}
for all $i,j = 1,2, \ldots, d_\sigma$ and $g \in G$.

\begin{proposition}
Let $\sigma, \rho \in \widehat{G}$. Then 
\begin{equation}\label{ORT}
\langle u_{i,j}^\sigma, u_{h,k}^\rho\rangle= \frac{|G|}{d_\sigma} \delta_{\sigma,
\rho}\delta_{i,h}\delta_{j,k} 
\mbox{   (orthogonality relations)}
\end{equation}
and
\begin{equation}\label{CON} 
u_{i,j}^\sigma* u_{h,k}^\rho = \frac{|G|}{d_\sigma} \delta_{\sigma,
\rho}\delta_{j,h} u^\sigma_{i,k}
\mbox{    (convolution properties)}
\end{equation}
for all $i,j = 1,2, \ldots, d_\sigma$ and $h,k = 1,2, \ldots, d_\rho$.
\end{proposition}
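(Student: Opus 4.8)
The plan is to derive both identities from Schur's lemma via a single averaging construction, and then to observe that the convolution property \eqref{CON} is a purely formal consequence of the orthogonality relations \eqref{ORT}; so the genuine content lies in \eqref{ORT}. First I would fix orthonormal bases $\{w_1, \dots, w_{d_\sigma}\}$ of the space $W$ of $\sigma$ and $\{v_1, \dots, v_{d_\rho}\}$ of the space $U$ of $\rho$, and for an arbitrary $A \in \Hom(U, W)$ introduce the averaged operator
\[
T_A = \frac{1}{|G|}\sum_{g \in G}\sigma(g)\, A\, \rho(g)^*.
\]
Using unitarity ($\rho(g)^* = \rho(g^{-1})$) together with the substitution $g \mapsto g_0 g$, a one-line computation shows $\sigma(g_0) T_A = T_A \rho(g_0)$, that is $T_A \in \Hom_G(U, W)$. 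Schur's lemma then yields a dichotomy: if $\sigma \not\sim \rho$ then $T_A = 0$, while if $\sigma = \rho$ (so $W = U$) then $T_A = \lambda I_W$ with $\lambda = \tr(A)/d_\sigma$, the value of $\lambda$ being recovered by taking traces and invoking centrality of the trace, so that $\tr(\sigma(g) A \sigma(g)^*) = \tr(A)$.

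Next I would specialize $A$ to the rank-one operator $A_{j,k}$ defined by $A_{j,k} u = \langle u, v_k\rangle w_j$. Expanding $\langle T_{A_{j,k}} v_h, w_i\rangle$ and moving $\rho(g)^*$ across the inner product by unitarity gives exactly
\[
\langle T_{A_{j,k}} v_h, w_i\rangle = \frac{1}{|G|}\sum_{g \in G} u_{i,j}^\sigma(g)\,\overline{u_{h,k}^\rho(g)} = \frac{1}{|G|}\langle u_{i,j}^\sigma, u_{h,k}^\rho\rangle.
\]
Comparing this with the two Schur cases, and noting in the equivalent case that $\tr(A_{j,k}) = \delta_{j,k}$ while $\langle I_W v_h, w_i\rangle = \delta_{i,h}$, produces \eqref{ORT}.

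For \eqref{CON} I would expand the definition of convolution, write $\rho(t^{-1}g) = \rho(t)^* \rho(g)$, and insert the resolution of the identity $\sum_{m=1}^{d_\rho} \langle \cdot, v_m\rangle v_m$ of $U$ to obtain
\[
[u_{i,j}^\sigma * u_{h,k}^\rho](g) = \sum_{m=1}^{d_\rho} u_{m,k}^\rho(g)\,\langle u_{i,j}^\sigma, u_{m,h}^\rho\rangle.
\]
Substituting \eqref{ORT} collapses the sum over $m$ to the single surviving term $m = i$ (present only when $\sigma \sim \rho$) and yields the asserted right-hand side $\frac{|G|}{d_\sigma}\delta_{\sigma,\rho}\delta_{j,h}\, u_{i,k}^\sigma$.

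The conceptual heart of the argument is the Schur dichotomy for $T_A$; everything else is bookkeeping. I expect the only real obstacle to be notational, namely keeping the placement of adjoints and complex conjugates, and the order of the indices $i,j,h,k$, consistent with the convention $u_{j,i}(g) = \langle \sigma(g) w_i, w_j\rangle$ of \eqref{stellap2}, since a single misplaced conjugate would transpose an index pair and spoil the final Kronecker deltas.
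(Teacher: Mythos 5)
Your proof is correct and complete, but note that the paper itself offers no inline argument for this proposition: its ``proof'' is just a pointer to Lemmas 3.6.3 and 3.9.14 of \cite{book}. Your route is the classical one and in substance matches what the cited lemmas contain: averaging an arbitrary $A\in\Hom(U,W)$ into $T_A=\frac{1}{|G|}\sum_{g\in G}\sigma(g)A\rho(g)^*$, invoking the Schur dichotomy ($T_A=0$ if $\sigma\not\sim\rho$, $T_A=\frac{\tr(A)}{d_\sigma}I_W$ if $\sigma=\rho$), and specializing to the rank-one $A_{j,k}$ to extract \eqref{ORT}. One genuine difference in organization: in \cite{book} the convolution identity (Lemma 3.9.14) sits in the Fourier-analytic development, whereas you deduce \eqref{CON} purely formally from \eqref{ORT} by writing $\rho(t^{-1}g)=\rho(t)^*\rho(g)$ and inserting the resolution of the identity, which is slightly more economical and keeps the whole proposition self-contained with Schur's lemma as the only input. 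Two bookkeeping points you handled correctly and should keep explicit: first, since $\sigma,\rho\in\widehat{G}$ are pairwise inequivalent representatives, $\sigma\sim\rho$ forces $\sigma=\rho$, which is what legitimizes your two-case dichotomy and matches the paper's convention for $\delta_{\sigma,\rho}$; second, your index placement is consistent with \eqref{stellap2} --- with $u^\sigma_{i,j}(g)=\langle\sigma(g)w_j,w_i\rangle$ one indeed gets $A_{j,k}\rho(g)^*v_h=\overline{u^\rho_{h,k}(g)}\,w_j$, hence $\langle T_{A_{j,k}}v_h,w_i\rangle=\frac{1}{|G|}\langle u^\sigma_{i,j},u^\rho_{h,k}\rangle$ in the unnormalized scalar product \eqref{e:scalar-l-g}, and the collapse of the sum in \eqref{CON} to the single term $m=i$ comes out with the asserted $\frac{|G|}{d_\sigma}\delta_{\sigma,\rho}\delta_{j,h}u^\sigma_{i,k}$ exactly as you wrote.
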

\begin{proof}
See  Lemma 3.6.3 and Lemma 3.9.14  of \cite{book}.
\end{proof}

The sum $\chi_\sigma = \sum_{i=1}^{d_\sigma} u_{i,i}\in L(g)$ of the diagonal entries of matrix coefficients is called the \emph{character} of $\sigma$. 

Given a vector space $V$, we denote by $V'= \Hom(V,\CC)$ its \emph{dual space}.
The \emph{conjugate} of a linear operator $T \in \Hom(V,V)$ is the linear operator $T' \in \Hom(V',V')$
defined by $[T'(v')](v) = v'(T(v))$ for all $v' \in V'$ and $v \in V$.
The \emph{conjugate} of a $G$-representation $(\sigma,W)$ is the
$G$-representation $(\sigma',W')$ defined by $\sigma'(g) = \left(\sigma(g^{-1})\right)'$ for all $g \in G$.
Note that $d_\sigma = d_{\sigma'}$ and that $\sigma$ is irreducible if and only if $\sigma'$ is.
Note that one may have $\sigma \not\sim \sigma'$ but it is always the case that $(\sigma')' \sim \sigma$.

Let $G_1$ and $G_2$ be two finite groups and suppose that $(\sigma_i,W_i)$ is a $G_i$-representation
for $i=1,2$. The \emph{outer tensor product} of $\sigma_1$ and $\sigma_2$ is the $(G_1 \times G_2)$-representation
$(\sigma_1 \boxtimes \sigma_2, W_1 \otimes W_2)$ defined on simple tensors by setting
$[(\sigma_1 \boxtimes \sigma_2)(g_1,g_2)](w_1 \otimes w_2) = \left(\sigma_1(g_1)w_1\right) \otimes \left(\sigma_2(g_2)w_2\right)$ for all $g_i \in G_i$ and $w_i \in W_i$, $i=1,2$ and then extending by linearity.
When $G_1 = G_2 = G$, we denote by $\widetilde{G} = \{(g,g): g \in G\}$ the \emph{diagonal subgroup} of $G \times G$
(this is clearly isomorphic to and we shall thus identify it with $G$) we set $\sigma_1 \otimes \sigma_2 = \Res^{G \times G}_{G}(\sigma_1 \boxtimes \sigma_2)$ and call the $G$-representation $(\sigma_1 \otimes \sigma_2,W_1 \otimes W_2)$ the \emph{inner tensor product} of $\sigma_1$ and $\sigma_2$.

In general, the inner tensor product of two irreducble $G$-representations is not irreducible: 
the following easy lemma, however, gives a sufficient condition guaranteeing its irreducibility
(we include the proof for the readers' convenience). 

\begin{lemma}
\label{l:filippo}
Let $G$ be a finite group and let $\sigma_1, \sigma_2 \in \widehat{G}$. 
Suppose that $\dim \sigma_1 = 1$.
Then $\sigma_1 \otimes \sigma_2$ is irreducible.
\end{lemma}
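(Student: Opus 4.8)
The plan is to show that the lattice of $G$-invariant subspaces of $W_1 \otimes W_2$ coincides with the lattice of $\sigma_2$-invariant subspaces of $W_2$, which immediately forces irreducibility. Since $\dim W_1 = 1$, the essential point is that tensoring with the one-dimensional representation $\sigma_1$ only multiplies the operators $\sigma_2(g)$ by nonzero scalars, and such a scalar twist does not alter which subspaces are invariant.

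First I would fix a unit vector $e \in W_1$ and write $\sigma_1(g)e = \lambda(g)e$, where $\lambda \colon G \to \CC$ satisfies $|\lambda(g)| = 1$ by unitarity of $\sigma_1(g)$. The map $\phi \colon W_2 \to W_1 \otimes W_2$ defined by $\phi(w) = e \otimes w$ is then a linear isomorphism. Defining the $G$-representation $(\tau, W_2)$ by $\tau(g) = \lambda(g)\sigma_2(g)$, the computation
\[
(\sigma_1 \otimes \sigma_2)(g)(e \otimes w) = (\sigma_1(g)e) \otimes (\sigma_2(g)w) = e \otimes (\lambda(g)\sigma_2(g)w) = \phi(\tau(g)w)
\]
shows that $\phi \in \Hom_G(W_2, W_1 \otimes W_2)$. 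Being bijective, $\phi$ witnesses $\tau \sim \sigma_1 \otimes \sigma_2$, so it suffices to prove that $\tau$ is irreducible.

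Next I would observe that a subspace $V \leq W_2$ is $\tau$-invariant if and only if it is $\sigma_2$-invariant: indeed $\lambda(g)\sigma_2(g)V \subseteq V$ holds for all $g \in G$ if and only if $\sigma_2(g)V \subseteq V$ holds for all $g \in G$, since each $\lambda(g)$ is a nonzero scalar. As $\sigma_2 \in \widehat{G}$ is irreducible, its only invariant subspaces are $\{0\}$ and $W_2$; hence the same holds for $\tau$, and therefore $\sigma_1 \otimes \sigma_2 \sim \tau$ is irreducible.

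There is no serious obstacle here; the entire content is the remark that twisting an irreducible representation by a one-dimensional one acts through nonzero scalars and so preserves the invariant-subspace lattice. Alternatively, one could argue via characters: since $\chi_{\sigma_1 \otimes \sigma_2}(g) = \lambda(g)\chi_{\sigma_2}(g)$ with $|\lambda(g)| = 1$, the orthogonality relations \eqref{ORT} yield $\langle \chi_{\sigma_1 \otimes \sigma_2}, \chi_{\sigma_1 \otimes \sigma_2}\rangle_{L(G)} = \langle \chi_{\sigma_2}, \chi_{\sigma_2}\rangle_{L(G)} = |G|$, after which one invokes the character norm criterion for irreducibility. I would nonetheless prefer the first route, as it is self-contained and does not require the character norm test, which is not explicitly recorded in the preliminaries.
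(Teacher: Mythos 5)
Your proposal is correct and matches the paper's own proof, which in fact gives both of your arguments: its primary computation is exactly your character-norm alternative (using multiplicativity of $\chi_{\sigma_1}$ and $\overline{\chi_{\sigma_1}(g)}=\chi_{\sigma_1}(g^{-1})$), and its stated alternative is your main route, identifying $W_1\otimes W_2$ with $W_2$ and observing that the scalar twist by $\sigma_1(g)$ preserves the lattice of invariant subspaces. Your write-up merely makes the identification $w\mapsto e\otimes w$ and the equivalence $\tau\sim\sigma_1\otimes\sigma_2$ explicit, so nothing further is needed.
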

\begin{proof}
We have
\[
\begin{split}
\langle \chi_{\sigma_1 \otimes \sigma_2}, \chi_{\sigma_1 \otimes \sigma_2} \rangle & = \langle \chi_{\sigma_1} \chi_{\sigma_2}, \chi_{\sigma_1} \chi_{\sigma_2} \rangle \\
& = \frac{1}{|G|} \sum_{g \in G} \chi_{\sigma_1}(g) \chi_{\sigma_2}(g) \overline{\chi_{\sigma_1}(g) \chi_{\sigma_2}(g)}\\
\mbox{(since $\overline{\chi_{\sigma_1}(g)} = \chi_{\sigma_1}(g^{-1})$)} \ & = \frac{1}{|G|} \sum_{g \in G} \chi_{\sigma_1}(g)\chi_{\sigma_1}(g^{-1}) \chi_{\sigma_2}(g) \overline{\chi_{\sigma_2}(g)}\\
\mbox{(since $\chi_{\sigma_1}\sim \sigma_1$ is multiplicative)} \ & = \frac{1}{|G|} \sum_{g \in G} \chi_{\sigma_1}(gg^{-1}) \chi_{\sigma_2}(g) \overline{\chi_{\sigma_2}(g)}\\
& = \frac{1}{|G|} \sum_{g \in G} \chi_{\sigma_2}(g) \overline{\chi_{\sigma_2}(g)}\\
& = 1.
\end{split}
\]
Alternatively, up to identifying $V_{\sigma_1 \otimes \sigma_2}$ with $V_{\sigma_2}$, which we simply denote by $V$,
suppose that $W \\subseteq V$ is a $\sigma_1 \otimes \sigma_2$ invariant subspace and let $g \in G$.
We then have $\sigma_1(g)\sigma_2(g)W = (\sigma_1 \otimes \sigma_2)(g)W \subseteq W$ and, by multiplying on the
left by the scalar $\overline{\sigma_1(g)}$, we obtain $\sigma_2(g)W \subseteq \overline{\sigma_1(g)}W = W$. Thus
$W \\subseteq V$ is $\sigma_2$ invariant and since $\sigma_2$ is irreducible, $W$ is trivial.
\end{proof}

Suppose that $G$ acts transitively on a finite set $X$ and denote by
$L(X)$ the vector space of all functions $f \colon X \to \CC$.
The associated \emph{permutation representation} $(\lambda,L(X))$  is defined by 
$[\lambda(g)f](x)=f(g^{-1}x)$ for all $f\in L(X)$, $x\in X$ and $g\in G$.

Fix a point $x_0 \in X$ and denote by $K = \{g \in G: gx_0 = x_0\} \leq G$ its \emph{stabilizer}. 
Then we may identify $X$ with the set $G/K$ of left cosets of $K$ in $G$ as homogeneous $G$-spaces and
$L(X)$ with the $L(G)$-subalgebra $L(G)^K = \{f \in L(G): f(gk) = f(g) \forall g \in G, \forall k \in K\}$ 
of  $K$-\emph{right-invariant} functions. Moreover we denote by $^K\!L(G)^K =\{f \in L(G): f(k_1gk_2) = f(g) \forall g \in G, \forall k_1,k_2 \in K\}$ the subalgebra of \emph{bi}-$K$-\emph{invariant} functions.

Given a representation $(\sigma,W)$ we denote by $W^K = \{w \in W: \sigma(k)w = w, \forall k \in K\} \leq W$ the subspace of $K$-\emph{invariant vectors} of $W$.

For the following result we refer to \cite{GelGeorg} and/or to the monographs \cite[Chapter 4]{book} and \cite{Diaconis}.

\begin{theorem} 
\label{t:GP}
The following conditions are equivalent:
\begin{enumerate}[{\rm (a)}]
\item The algebra $^K\!L(G)^K$ is commutative;
\item the permutation representation $(\lambda,L(X))$ is multiplicity-free;
\item the algebra $\Hom_G(L(X),L(X))$ is commutative;
\item for every $(\sigma,W) \in \widehat{G}$ one has $\dim(W^K) \leq 1$;
\item for every $(\sigma,W) \in \widehat{G}$ one has $\dim \Hom_G(W,L(X)) \leq 1$.
\end{enumerate}
\end{theorem}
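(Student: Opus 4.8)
The plan is to prove that all five conditions are equivalent by establishing four bridges, \textrm{(a)}$\Leftrightarrow$\textrm{(c)}, \textrm{(c)}$\Leftrightarrow$\textrm{(b)}, \textrm{(b)}$\Leftrightarrow$\textrm{(e)}, and \textrm{(e)}$\Leftrightarrow$\textrm{(d)}, which together chain the list into a single equivalence. The guiding observation is that commutativity of a finite-dimensional algebra is preserved by any algebra isomorphism \emph{or} anti-isomorphism, while multiplicity-freeness is, by the definition of multiplicity recorded above, already a statement about the dimensions $\dim\Hom_G(W,L(X))$. Consequently three of the four bridges reduce to structural identifications, and only one will carry the genuine analytic content.

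I would first dispose of \textrm{(b)}$\Leftrightarrow$\textrm{(e)}, which is essentially a restatement of the definition: for irreducible $(\sigma,W)\in\widehat{G}$ the multiplicity of $\sigma$ in $(\lambda,L(X))$ is exactly $m=\dim\Hom_G(W,L(X))$, so $\lambda$ is multiplicity-free (every such $m\le 1$) precisely when $\dim\Hom_G(W,L(X))\le 1$ for all $\sigma$. For \textrm{(c)}$\Leftrightarrow$\textrm{(b)} I would invoke the isotypic decomposition: writing $L(X)\cong\bigoplus_{\sigma}m_\sigma W_\sigma$ and applying Schur's lemma exactly as in the proof of Lemma \ref{lemma1}, one obtains an algebra isomorphism $\Hom_G(L(X),L(X))\cong\bigoplus_\sigma M_{m_\sigma}(\CC)$ onto a direct sum of full matrix algebras; the bookkeeping is carried out by choosing, for each $\sigma$, an orthonormal basis $T_1,\dots,T_{m_\sigma}$ of $\Hom_G(W_\sigma,L(X))$ as in Lemma \ref{lemma1}, which identifies the commutant with block matrices acting on the isometric orthogonal decomposition. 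Such a product is commutative if and only if every $m_\sigma\le1$, i.e.\ iff $\lambda$ is multiplicity-free.

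The bridge \textrm{(a)}$\Leftrightarrow$\textrm{(c)} is the one I expect to be the main obstacle. Using the identification $L(X)\cong L(G)^K$ recalled above, every $T\in\Hom_G(L(X),L(X))$ commutes with left translations and is therefore given by right convolution $f\mapsto f*\phi$ with a uniquely determined $\phi\in L(G)$; the requirement that $T$ stabilize the subspace $L(G)^K$ of $K$-right-invariant functions forces $\phi\in{}^K\!L(G)^K$, yielding a linear bijection between $\Hom_G(L(X),L(X))$ and ${}^K\!L(G)^K$. Since composition of operators corresponds to convolution of kernels, $T_\phi\circ T_\psi=T_{\psi*\phi}$, this bijection is an algebra anti-isomorphism (one makes it an isomorphism, if desired, by composing with $\phi\mapsto\check\phi$, $\check\phi(g)=\phi(g^{-1})$), so one algebra is commutative iff the other is. The delicate point here is fixing the conventions consistently — left versus right convolution, and the precise form of the identification $L(X)\cong L(G)^K$ — and verifying that bi-$K$-invariance is exactly the condition cut out by $G$-equivariance together with $K$-right-invariance of the kernel.

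Finally, for \textrm{(e)}$\Leftrightarrow$\textrm{(d)} I would exhibit a natural isomorphism $\Hom_G(W,L(X))\cong(W')^K$ by evaluation at the base point $x_0\leftrightarrow eK$: to $T$ I associate $\phi_T\in W'$ defined by $\phi_T(w)=(Tw)(e)$. Because $K=\Stab(x_0)$, for $k\in K$ one computes $(T\sigma(k)w)(e)=(\lambda(k)Tw)(e)=(Tw)(k^{-1}x_0)=(Tw)(e)$, so $\phi_T$ lies in $(W')^K$; conversely $\phi\in(W')^K$ reconstructs an intertwiner via $(T_\phi w)(g)=\phi(\sigma(g)^{-1}w)$, and $K$-invariance of $\phi$ is exactly what makes $T_\phi w$ lie in $L(G)^K$, the two assignments being mutually inverse. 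Hence $\dim\Hom_G(W,L(X))=\dim(W')^K$, and since $\dim(W')^K$ and $\dim W^K$ are the multiplicities of the trivial $K$-representation in $\Res^G_K\sigma'$ and $\Res^G_K\sigma$ — complex-conjugate character sums, hence equal nonnegative integers — we get $\dim\Hom_G(W,L(X))=\dim W^K$, which is precisely \textrm{(e)}$\Leftrightarrow$\textrm{(d)}. (This step is just Frobenius reciprocity for $L(X)=\Ind_K^G\mathbf 1_K$ made explicit, so it could alternatively be quoted once induction is in place.)
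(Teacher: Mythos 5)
Your proof is correct, but note that the paper does not actually prove Theorem \ref{t:GP}: it refers to \cite{GelGeorg}, \cite[Chapter 4]{book} and \cite{Diaconis}, so there is no in-paper argument to match. Your chain (a)$\Leftrightarrow$(c)$\Leftrightarrow$(b)$\Leftrightarrow$(e)$\Leftrightarrow$(d) is the standard route of those references, and each bridge is sound: (b)$\Leftrightarrow$(e) is definitional given the paper's definition of multiplicity, (c)$\Leftrightarrow$(b) is the Schur-lemma computation of the commutant as $\bigoplus_\sigma M_{m_\sigma}(\CC)$ (and your appeal to Lemma \ref{lemma1} for the bookkeeping is apt), and your duality argument for (e)$\Leftrightarrow$(d) via $\dim\Hom_G(W,L(X))=\dim(W')^K=\dim W^K$ is a legitimate variant of what the paper itself records after Definition \ref{d:GP}, namely the explicit isometry $w\mapsto T_w$ from $W^K$ onto $\Hom_G(W,L(X))$; that map absorbs the dualization into the scalar product, so your extra character-sum step $\dim(W')^K=\dim W^K$ is precisely what replaces it.

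The one step you should write out rather than gesture at is the kernel representation in (a)$\Leftrightarrow$(c). An operator $T\in\Hom_G(L(X),L(X))$ is defined only on $L(G)^K$, so ``commutes with left translations, hence is right convolution by some $\phi\in L(G)$'' does not literally apply, and the subsequent claim that stabilizing $L(G)^K$ ``forces'' bi-invariance needs an argument (a right convolution kernel producing right-$K$-invariant outputs need not itself be left-$K$-invariant; one must choose the canonical kernel). The quickest repair, consistent with your sketch: let $P\colon L(G)\to L(G)^K$ be the right-averaging projection $[Pf](g)=\frac{1}{\lvert K\rvert}\sum_{k\in K}f(gk)$, so that $P\delta_{1_G}=\frac{1}{\lvert K\rvert}\mathbf{1}_K$ and the functions $\lambda(g)P\delta_{1_G}=\frac{1}{\lvert K\rvert}\mathbf{1}_{gK}$ span $L(G)^K$. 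Writing $f=\sum_{g\in G}f(g)\,\lambda(g)P\delta_{1_G}$ for $f\in L(G)^K$ and using $G$-equivariance gives $Tf=f*\phi$ with $\phi:=T(P\delta_{1_G})\in L(G)^K$ (right $K$-invariance because $T$ maps into $L(G)^K$), while left $K$-invariance of $\phi$ follows from $\lambda(k)P\delta_{1_G}=P\delta_{1_G}$ for $k\in K$, whence $\phi\in{}^K\!L(G)^K$. With $T_\phi T_\psi=T_{\psi*\phi}$ this identification is an algebra anti-isomorphism, which, as you say, suffices to transfer commutativity.
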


\begin{definition}
\label{d:GP}{\rm 
If one of the equivalent conditions in Theorem \ref{t:GP} is satisfied, on says that $(G,K)$ is a
\emph{Gelfand pair}.}
\end{definition}

The equivalence (d) $\Leftrightarrow$ (e) in Theorem \ref{t:GP} is a particular case of the
fact that the multiplicity of $(\sigma,W)$ in $(\lambda,L(X))$ is equal to $\dim W^K$. 
More precisely (see  \cite[Section 1.2.1]{book3} or \cite{st1}), with every $w\in W^K$ we may associate the linear
map $T_w \colon W \rightarrow L(X)$ defined by setting
\[
(T_wv)(x)=\sqrt{\frac{d_\sigma}{\lvert X\rvert}}\langle v,\sigma(g)w\rangle_V
\]
for all $v\in W$, $x\in X$ (here $g$ is any element in $G$ such that $gx_0=x$). 
Then one easily checks that $T_w \in \Hom_G(W,L(X))$ and that 
\[
\langle T_{w_1}v_1, T_{w_2}w_2 \rangle_{L(X)}=\langle v_1,v_2\rangle_W\langle w_2,w_1\rangle_W
\]
for all $w_1,w_2,v_1,v_2 \in W$.
This easily implies that the map 
\begin{equation}\label{isomTv}
w\longrightarrow T_w
\end{equation}
yields an isometry between $W^K$ and $\Hom_G(W,L(X))$. 

An irreducible representation $(\sigma,W) \in \widehat{G}$ contained in $(\lambda,L(X))$
is called \emph{spherical} and its {\em spherical character} is the function $\chi_\sigma^K \in L(G)$ defined by 
\begin{equation}\label{sphchar}
\chi_\sigma^K(g)=\frac{1}{\lvert K\rvert}\sum_{k\in K}\overline{\chi_\sigma(kg)}
\end{equation}
for all $g \in G$, where $\chi_\sigma$ denotes the (usual) character of $\sigma$
(see  \cite[Section 1.2.2]{book3} or \cite{st1}). In particular,
\begin{equation}\label{sphcharmult}
\chi_\sigma^K(1_G)=\text{ multiplicity of }\sigma\text{ in }\lambda.
\end{equation}
%%%%%%%%%%%%%%%%%%%%%%%%%%%%%%%%%%%%%%%%%%%%%%%%%%%%%
%%%%%%%%%%%%%%%%%%%%%%%%%%%%%%%%%%%%%%%%%%%%%%%%%%%%%
\section{Harmonic analysis for the pair $(G \times G \times H, \tilde{H})$}
\label{s:FR}
%%%%%%%%%%%%%%%%%%%%%%%%%%%%%%%%%%%%%%%%%%%%%%%%%%%%%
%%%%%%%%%%%%%%%%%%%%%%%%%%%%%%%%%%%%%%%%%%%%%%%%%%%%%

Let $G$ be a finite group and $H \leq G$ be a subgroup.

We define a transitive action of the group $G\times G\times H$ on the set $X=G\times G$ by setting
\begin{equation}
\label{e:azione}
(g_1,g_2,h)(g_3,g_4)=(g_1g_3g_2^{-1},g_2g_4h^{-1})
\end{equation}
for all $(g_1,g_2,h)\in G\times G\times H$ and $(g_3,g_4)\in G\times G$. 
The stabilizer of $(1_G,1_G)\in G\times G$ is the subgroup $\widetilde{H}=\{(h,h,h):h\in H\}$. 
Denote by $\eta$ the associated permutation representation of $G\times G\times H$, so that
\[
[\eta(g_1,g_2,h)f](g_3,g_4)=f(g_1^{-1}g_3g_2,g_2^{-1}g_4h)
\]
for all $f\in L(G \times G)$, $(g_1,g_2,h)\in G\times G\times H$ and $(g_3,g_4)\in G\times G$. 

\subsection{A Frobenius reciprocity type theorem and proof of the main result}
\label{ss:FR}
We shall deduce Theorem \ref{c:mc1} from the following result which can be regarded as a sort of
Frobenius reciprocity for permutation representations.

\begin{theorem}
\label{mainthm}
Let $(\rho_1,V_1)$ and $(\rho_2,V_2)$ be two {\em irreducible} $G$-representations and let $(\theta,W)$ be an $H$-representation. 
Then the map
\[
\begin{array}{ccc}
\Hom_{G\times G\times H}(\rho_1\boxtimes \rho_2\boxtimes \theta,\eta)&\longrightarrow&\Hom_H\left(\Res^G_H(\rho_1\otimes\rho_2),\theta'\right)\\
T&\longmapsto&\widetilde{T}
\end{array}
\]
where
\[
\left[\widetilde{T}(v_1\otimes v_2)\right](w)=\frac{\lvert G\rvert}{\sqrt{d_\theta}}[T(v_1\otimes v_2\otimes w)](1_G,1_G)
\]
for all $v_1\in V_1$, $v_2\in V_2$ and $w\in W$,
is an isometric isomorphism of vector spaces. Its inverse is given by the map
\[
\begin{array}{ccc}
\Hom_H\left(\Res^G_H(\rho_1\otimes\rho_2\right),\theta')&\longrightarrow&\Hom_{G\times G\times H}(\rho_1\boxtimes \rho_2\boxtimes \theta,\eta)\\
S&\longmapsto&\widehat{S}
\end{array}
\]
where
\[
\left[\widehat{S}(v_1\otimes v_2\otimes w)\right](g_1,g_2)=\frac{\sqrt{d_\theta}}{\lvert G\rvert}\left\{S[\rho_1(g_2^{-1}g_1^{-1})v_1\otimes \rho_2(g_2^{-1})v_2]\right\}(w)
\]
for all $v_1\in V_1$, $v_2\in V_2$, $w\in W$ and $g_1,g_2\in G$. 
\end{theorem}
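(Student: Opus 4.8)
The plan is to exploit that the assignment $T\mapsto\widetilde{T}$ is manifestly linear in $T$, so that the whole statement reduces to four separate verifications, which I would carry out in this order: (a) $\widetilde{T}$ really lands in $\Hom_H(\Res^G_H(\rho_1\otimes\rho_2),\theta')$; (b) $\widehat{S}$ really lands in $\Hom_{G\times G\times H}(\rho_1\boxtimes\rho_2\boxtimes\theta,\eta)$; (c) the two assignments $T\mapsto\widetilde{T}$ and $S\mapsto\widehat{S}$ are mutually inverse; (d) the bijection is isometric for the normalized Hilbert--Schmidt products. Parts (a)--(c) are bookkeeping with the two defining formulas and the action \eqref{e:azione}; the genuine content is concentrated in (d).

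For (a) I would test $\widetilde{T}$ against an arbitrary $h\in H$. The point is that the diagonal element $(h,h,h)\in\widetilde{H}$ fixes $(1_G,1_G)$, so evaluating the equivariance identity $T\circ(\rho_1\boxtimes\rho_2\boxtimes\theta)(h,h,h)=\eta(h,h,h)\circ T$ at $(1_G,1_G)$ and using $[\eta(h,h,h)f](1_G,1_G)=f(1_G,1_G)$ yields $[T(\rho_1(h)v_1\otimes\rho_2(h)v_2\otimes w)](1_G,1_G)=[T(v_1\otimes v_2\otimes\theta(h^{-1})w)](1_G,1_G)$; multiplying by $|G|/\sqrt{d_\theta}$ this is exactly the statement $\widetilde{T}\circ(\rho_1\otimes\rho_2)(h)=\theta'(h)\circ\widetilde{T}$, since $[\theta'(h)\xi](w)=\xi(\theta(h^{-1})w)$. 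For (b) I would substitute the group action into the formula for $\widehat{S}$, compute the arguments $(a^{-1}g_1b,\,b^{-1}g_2h)$ coming from $\eta(a,b,h)$, collect the products, factor an $h^{-1}$ out of both $\rho_1$ and $\rho_2$, and invoke the $H$-equivariance of $S$ (rewriting $\theta'(h^{-1})S[\cdots]$ as an evaluation at $\theta(h)w$); the two sides then coincide, so $\widehat{S}$ is $G\times G\times H$-equivariant.

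For (c) the composite $\widetilde{\widehat{S}}$ is immediate: evaluating $\widehat{S}(v_1\otimes v_2\otimes w)$ at $(1_G,1_G)$ kills every $\rho_i$ factor, and the scalars $|G|/\sqrt{d_\theta}$ and $\sqrt{d_\theta}/|G|$ cancel, giving back $S$. For $\widehat{\widetilde{T}}$ I would again cancel the scalars and be left with $[T(\rho_1(g_2^{-1}g_1^{-1})v_1\otimes\rho_2(g_2^{-1})v_2\otimes w)](1_G,1_G)$; here transitivity of the action lets me write $(g_1,g_2)=(g_1g_2,\,g_2,\,1_H)\cdot(1_G,1_G)$, and applying equivariance of $T$ under $s=(g_1g_2,g_2,1_H)$ together with $[\eta(s^{-1})f](1_G,1_G)=f(g_1,g_2)$ recovers $[T(v_1\otimes v_2\otimes w)](g_1,g_2)$, i.e. $\widehat{\widetilde{T}}=T$.

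The hard part will be (d). Here I would fix orthonormal bases of $V_1,V_2,W$, write $e_{ijk}$ for the resulting orthonormal basis of $V_1\otimes V_2\otimes W$, and expand both normalized traces. Using the induced inner product on $W'$, the left-hand side collapses to $\frac{|G|^2}{d_{\rho_1}d_{\rho_2}d_\theta}\,\Phi(1_G,1_G)$, where $\Phi(g_1,g_2)=\sum_{i,j,k}[T_1e_{ijk}](g_1,g_2)\overline{[T_2e_{ijk}](g_1,g_2)}$, while $\langle T_1,T_2\rangle$ equals $\frac{1}{d_{\rho_1}d_{\rho_2}d_\theta}\sum_{(g_1,g_2)\in G\times G}\Phi(g_1,g_2)$. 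The key observation is that $\Phi$ is \emph{constant} on $G\times G$: it is the basis-independent pairing of the two functionals ``evaluate $T_\alpha(\,\cdot\,)$ at $(g_1,g_2)$'', and writing $(g_1,g_2)=s\cdot(1_G,1_G)$ and using equivariance shows this functional is $(\,\cdot\,)\mapsto[T_\alpha((\rho_1\boxtimes\rho_2\boxtimes\theta)(s^{-1})\,\cdot\,)](1_G,1_G)$; since $\rho_1\boxtimes\rho_2\boxtimes\theta$ is unitary it merely permutes the orthonormal basis, leaving the sum unchanged, so $\Phi(g_1,g_2)=\Phi(1_G,1_G)$. Summing over the $|G|^2$ points of $X=G\times G$ then matches the left-hand side exactly, the factor $|G|/\sqrt{d_\theta}$ in the definition of $\widetilde{T}$ being precisely what reconciles the two normalizations. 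I expect the only real subtleties to be fixing the conjugation conventions for the inner product on the dual space $W'$ and making the constancy-of-$\Phi$ argument clean; everything else is mechanical.
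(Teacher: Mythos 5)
Your proposal is correct, and your steps (a)--(c) track the paper's proof essentially verbatim: the paper likewise characterizes the two intertwining properties (its equations \eqref{Tintertw} and \eqref{Sintertw}), verifies them for $\widetilde{T}$ and $\widehat{S}$ by the same evaluations, and checks both composites exactly as you do, using $(g_1,g_2)=(g_1g_2,g_2,1_H)\cdot(1_G,1_G)$. Where you genuinely diverge is the isometry (d). The paper expands $\rho_1(g_2^{-1}g_1^{-1})v_{1,i}$ and $\rho_2(g_2^{-1})v_{2,j}$ in matrix coefficients and collapses the sum over $(g_1,g_2)\in G\times G$ via the Schur orthogonality relations \eqref{ORT} --- the one place in the whole proof where irreducibility of $\rho_1,\rho_2$ is invoked. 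You instead show that
\[
\Phi(g_1,g_2)=\sum_{i,j,\ell}\left[T_1(v_{1,i}\otimes v_{2,j}\otimes w_\ell)\right](g_1,g_2)\cdot\overline{\left[T_2(v_{1,i}\otimes v_{2,j}\otimes w_\ell)\right](g_1,g_2)}
\]
is constant on $G\times G$: by transitivity and equivariance, $\Phi(g_1,g_2)$ equals the analogous sum at $(1_G,1_G)$ with the orthonormal basis replaced by its image under the unitary $(\rho_1\boxtimes\rho_2\boxtimes\theta)(s^{-1})$, $s=(g_1g_2,g_2,1_H)$, and such a sum is independent of the orthonormal basis (for fixed linear functionals $\phi_\beta(\cdot)=\langle\,\cdot\,,u_\beta\rangle$ one has $\sum_\alpha\phi_1(e_\alpha)\overline{\phi_2(e_\alpha)}=\langle u_2,u_1\rangle$, basis-free); summing the constant over the $\lvert G\rvert^2$ points of $G\times G$ then reconciles the two normalizations exactly as you say. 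This buys something real: your argument nowhere uses irreducibility, so it establishes the isometric isomorphism for \emph{arbitrary} finite-dimensional unitary $\rho_1,\rho_2$, whereas the paper's computation as written requires $\rho_1,\rho_2\in\widehat{G}$ (irreducibility being what is actually needed later, in deducing Theorem \ref{c:mc1}); the paper's route, in exchange, stays within the matrix-coefficient machinery it uses throughout. One wording correction: a unitary operator does not ``permute'' an orthonormal basis --- it carries it to a generally different orthonormal basis --- so the justification must be the basis-independence of the pairing, which you do state parenthetically; lead with that and drop the permutation language.
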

\begin{proof}
First of all, note that a linear operator $T:V_1\otimes V_2\otimes W\rightarrow L(G\times G)$ intertwines $\rho_1\boxtimes \rho_2\boxtimes \theta$ with $\eta$ if and only if 
\begin{equation}\label{Tintertw}
\left\{T[\rho_1(g_1)v_1\otimes\rho_2(g_2)v_2\otimes\theta(h)w]\right\}(g_3,g_4)=\left[T(v_1\otimes v_2\otimes w)\right](g_1^{-1}g_3g_2,g_2^{-1}g_4h),
\end{equation}
while a linear operator $S:V_1\otimes V_2\rightarrow W'$ intertwines $\Res^G_H(\rho_1\otimes \rho_2)$ with $\theta'$ if and only if
\begin{equation}\label{Sintertw}
\left\{S[\rho_1(h)v_1\otimes\rho_2(h)v_2]\right\}(w)=\left[\theta'(h)S(v_1\otimes v_2)\right](w),
\end{equation}
for all $v_1\in V_1$, $v_2\in V_2$, $w\in W$, $g_1,g_2,g_3,g_4\in G$ and $h\in H$. It follows that:
\[
\begin{split}
\left\{\widetilde{T}[\rho_1(h)v_1\otimes\rho_2(h)v_2]\right\}(w)=&\left\{T[\rho_1(h)v_1\otimes\rho_2(h)v_2\otimes w]\right\}(1_G,1_G)\\
(\text{by }\eqref{Tintertw})\qquad=&\frac{\lvert G\rvert}{\sqrt{d_\theta}}\left\{T[v_1\otimes v_2\otimes w]\right\}(1_G,h^{-1})\\
(\text{again by }\eqref{Tintertw})\qquad=&\frac{\lvert G\rvert}{\sqrt{d_\theta}}\left\{T[v_1\otimes v_2\otimes \theta(h^{-1})w]\right\}(1_G,1_G)\\
=&\left[\widetilde{T}(v_1\otimes v_2)\right]\left(\theta(h^{-1})w\right)\\
=&\left[\theta'(h)\widetilde{T}(v_1\otimes v_2)\right](w).
\end{split}
\]
That is, $\widetilde{T}$ intertwines $\Res^G_H(\rho_1\otimes \rho_2)$ with $\theta'$. Similarly,
\[
\begin{split}
\left\{\widehat{S}[\rho_1(g_1)v_1\otimes\rho_2(g_2)v_2\otimes\theta(h)w]\right\}(g_3,g_4)=&\frac{\sqrt{d_\theta}}{\lvert G\rvert}\left\{S[\rho_1(g_4^{-1}g_3^{-1}g_1)v_1\otimes \rho_2(g_4^{-1}g_2)v_2]\right\}(\theta(h)w)\\
=&\frac{\sqrt{d_\theta}}{\lvert G\rvert}\left\{\theta'(h^{-1})S[\rho_1(g_4^{-1}g_3^{-1}g_1)v_1\otimes \rho_2(g_4^{-1}g_2)v_2]\right\}(w)\\
(\text{by }\eqref{Sintertw})\qquad=&\frac{\sqrt{d_\theta}}{\lvert G\rvert}\left\{S[\rho_1(h^{-1}g_4^{-1}g_3^{-1}g_1)v_1\otimes \rho_2(h^{-1}g_4^{-1}g_2)v_2]\right\}(w)\\
=&\left[\widehat{S}(v_1\otimes v_2\otimes w)\right](g_1^{-1}g_3g_2,g_2^{-1}g_4h)
\end{split}
\]
and therefore $\widehat{S}$ intertwines $\rho_1\boxtimes \rho_2\boxtimes \theta$ with $\eta$. 
Moreover, we have
\[
\begin{split}
\left[\widehat{\widetilde{T}}(v_1\otimes v_2\otimes w)\right](g_1,g_2)=&\frac{\sqrt{d_\theta}}{\lvert G\rvert}\left\{\widetilde{T}[\rho_1(g_2^{-1}g_1^{-1})v_1\otimes \rho_2(g_2^{-1})v_2]\right\}(w)\\
=&\left\{T[\rho_1(g_2^{-1}g_1^{-1})v_1\otimes \rho_2(g_2^{-1})v_2\otimes w]\right\}(1_G,1_G)\\
(\text{by }\eqref{Tintertw})\qquad=&\left[T(v_1\otimes v_2\otimes w)\right](g_1,g_2)
\end{split}
\]
and
\[
\begin{split}
\left[\widetilde{\widehat{S}}(v_1\otimes v_2)\right](w)=&\frac{\lvert G\rvert}{\sqrt{d_\theta}}\left[\widehat{S}[(v_1\otimes v_2\otimes w)\right](1_G,1_G)\\
=&\left[S(v_1\otimes v_2)\right](w),
\end{split}
\]
that is, $\widehat{\widetilde{T}}=T$ and $\widetilde{\widehat{S}}=S$. It follows that the map $T \mapsto \widetilde{T}$ is a bijection and its inverse is $S \mapsto \widehat{S}$. 
Finally for $i=1,2$ let $\{v_{i,j}:j=1,2,\dotsc,d_{\rho_i}\}$ be an orthonormal basis in $V_i$ and denote by $u^{\rho_i}_{h,j}$ the associated matrix coefficients; let also $\{w_j:j=1,2,\dotsc,d_\theta\}$ be an orthonormal basis in $W$. If $T_1,T_2\in\Hom_{G\times G\times H}(\rho_1\boxtimes \rho_2\boxtimes \theta,\eta)$ then we have, on the one hand,
\[
\begin{split}
\left\langle\widetilde{T_1},\widetilde{T_2}\right\rangle=&\frac{1}{d_{\rho_1}d_{\rho_2}}\tr(\widetilde{T_2}^*\widetilde{T_1})\\
=&\frac{1}{d_{\rho_1}d_{\rho_2}}\sum_{i=1}^{d_{\rho_1}}\sum_{j=1}^{d_{\rho_2}}\left\langle\widetilde{T_1}(v_{1,i}\otimes v_{2,j}),\widetilde{T_2}(v_{1,i}\otimes v_{2,j})\right\rangle_{W'}\\
=&\frac{1}{d_{\rho_1}d_{\rho_2}}\sum_{i=1}^{d_{\rho_1}}\sum_{j=1}^{d_{\rho_2}}\sum_{\ell=1}^{d_\theta}\left[\widetilde{T_1}(v_{1,i}\otimes v_{2,j})\right](w_\ell)\cdot\overline{\left[\widetilde{T_2}(v_{1,i}\otimes v_{2,j})\right](w_\ell)}\\
=&\frac{\lvert G\rvert^2}{d_{\rho_1}d_{\rho_2}d_\theta}\sum_{h=1}^{d_{\rho_1}}\sum_{m=1}^{d_{\rho_2}}\sum_{\ell=1}^{d_\theta}
\left[T_1(v_{1,h}\otimes v_{2,m}\otimes w_\ell)\right](1_G,1_G)\cdot\\
&\qquad\qquad\qquad\qquad\cdot\overline{\left[T_2(v_{1,h}\otimes v_{2,m}\otimes w_\ell)\right](1_G,1_G)}
\end{split}
\]
and, on the other hand,
\[
\begin{split}
\left\langle T_1,T_2\right\rangle=&\frac{1}{d_{\rho_1}d_{\rho_2}d_\theta}\sum_{i=1}^{d_{\rho_1}}\sum_{j=1}^{d_{\rho_2}}\sum_{\ell=1}^{d_\theta}\sum_{g_1,g_2\in G}\left[T_1(v_{1,i}\otimes v_{2,j}\otimes w_\ell)\right](g_1,g_2)\cdot\\
&\qquad\qquad\qquad\qquad\qquad\qquad\cdot\overline{\left[T_2(v_{1,i}\otimes v_{2,j}\otimes w_\ell)\right](g_1,g_2)}\\
(\text{by }\eqref{Tintertw})\quad=&\frac{1}{d_{\rho_1}d_{\rho_2}d_\theta}\sum_{i=1}^{d_{\rho_1}}\sum_{j=1}^{d_{\rho_2}}\sum_{\ell=1}^{d_\theta}\sum_{g_1,g_2\in G}\left[T_1(\rho_1(g_2^{-1}g_1^{-1})v_{1,i}\otimes \rho_2(g_2^{-1})v_{2,j}\otimes w_\ell)\right](1_G,1_G)\cdot\\
&\qquad\qquad\qquad\qquad\cdot\overline{\left[T_2(\rho_1(g_2^{-1}g_1^{-1})v_{1,i}\otimes \rho_2(g_2^{-1})v_{2,j}\otimes w_\ell)\right](1_G,1_G)}\\
=&\frac{1}{d_{\rho_1}d_{\rho_2}d_\theta}\sum_{i,h,s=1}^{d_{\rho_1}}\sum_{j,m,t=1}^{d_{\rho_2}}\sum_{\ell=1}^{d_\theta}\sum_{g_2\in G}u_{m,j}^{\rho_2}(g_2^{-1})\overline{u_{t,j}^{\rho_2}(g_2^{-1})}\sum_{g_1\in G}u_{h,i}^{\rho_1}(g_2^{-1}g_1^{-1})\overline{u_{s,i}^{\rho_1}(g_2^{-1}g_1^{-1})}\cdot\\
&\cdot \left[T_1(v_{1,h}\otimes v_{2,m}\otimes w_\ell)\right](1_G,1_G)\cdot\overline{\left[T_2(v_{1,s}\otimes v_{2,t}\otimes w_\ell)\right](1_G,1_G)}\\
(\text{by }\eqref{ORT})\quad=&\frac{\lvert G\rvert^2}{d_{\rho_1}d_{\rho_2}d_\theta}\sum_{h=1}^{d_{\rho_1}}\sum_{m=1}^{d_{\rho_2}}\sum_{\ell=1}^{d_\theta}
\left[T_1(v_{1,h}\otimes v_{2,m}\otimes w_\ell)\right](1_G,1_G)\cdot\\
&\qquad\qquad\qquad\qquad\cdot\overline{\left[T_2(v_{1,h}\otimes v_{2,m}\otimes w_\ell)\right](1_G,1_G)}.
\end{split}.
\]
We deduce that $\left\langle\widetilde{T_1},\widetilde{T_2}\right\rangle_{\Hom({V_1 \otimes V_2, W'})}=\langle T_1,T_2\rangle_{\Hom(V_1 \otimes V_2 \otimes W, L(G \times G))}$ showing that the bijective map $T\mapsto\widetilde{T}$ is indeed an isometry.
\end{proof}

\begin{proof}[Proof of Theorem \ref{c:mc1}]
We first observe that if an $H$-representation $\theta$ is irreducible, then the multiplicity of the irreducible $(G \times G \times H)$-representation $\rho_1\boxtimes\rho_2\boxtimes\theta$ in the permutation representation $\eta$ (associated with the pair $(G \times G \times H,\widetilde{H})$)
equals the multiplicity of the irreducible $H$-representation $\theta'$ in $\Res^G_H(\rho_1\otimes \rho_2)$.
Moreover, all irreducible $(G \times G \times H)-$ (resp. $H$-)representations are, up to equivalence, equal to $\rho_1\boxtimes\rho_2\boxtimes\theta$ (resp. $\theta'$) with $\rho_1, \rho_2 \in \widehat{G}$ and $\theta \in \widehat{H}$. Theorefore $\eta$ is multiplicity-free (and therefore, all equivalent conditions in Theorem \ref{t:GP} are satisfied for the pair $(G \times G \times H,\widetilde{H})$) if and only if $\Res^G_H(\rho_1\otimes \rho_2)$ is multiplicity-free.
\end{proof}

\subsection{Invariant vectors and spherical characters}
In this section we describe the isometry \eqref{isomTv} and the spherical characters for the pair
$(G \times G \times H, \widetilde{H})$.

Using the description of tensor products in \cite[Section 9.1]{book} (see also \cite{Simon}),
we identify  $V_1\otimes V_2\otimes W$ with the vector space of all {\em anti-trilinear} maps $B\colon V_1\times V_2\times W \to \CC$.
Moreover, an element $B\in V_1\otimes V_2\otimes W$ is ${\widetilde{H}}$-invariant (that is,
$B\in\left(V_1\otimes V_2\otimes W\right)^{\widetilde{H}}$) if and only if
\[
B(\rho_1(h)v_1,\rho_2(h)v_2,\theta(h)w)=B(v_1,v_2,w)
\]
for all $v_1\in V_1, v_2 \in V_2, w \in W$ and $h \in H$.

\begin{proposition}\label{propTB}
Let $B\in\left(V_1\otimes V_2\otimes W\right)^{\widetilde{H}}$. 
Then the associated intertwing operator $T_B\in\Hom_{G\times G\times H}(\rho_1\boxtimes \rho_2\boxtimes \theta,\eta)$ given by \eqref{isomTv} has the form:
\[
[T_B(v_1\otimes v_2\otimes w)](g_1,g_2)=\frac{\sqrt{d_{\rho_1}d_{\rho_2}d_\theta}}{\lvert G\rvert}\overline{B(\rho_1(g_2^{-1}g_1^{-1})v_1, \rho_2(g_2^{-1})v_2, w)}.
\]
for all $v_1 \in V_1, v_2 \in V_2$ and $w \in W$.
\end{proposition}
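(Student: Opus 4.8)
The plan is to apply the general construction \eqref{isomTv} directly to the Gelfand-pair data at hand: the group $G\times G\times H$, its transitive action \eqref{e:azione} on $X=G\times G$ with base point $x_0=(1_G,1_G)$ and stabilizer $\widetilde{H}$, the irreducible representation $\sigma=\rho_1\boxtimes\rho_2\boxtimes\theta$ on $V_1\otimes V_2\otimes W$, and the $\widetilde{H}$-invariant vector $B$. Here $d_\sigma=d_{\rho_1}d_{\rho_2}d_\theta$ and $\lvert X\rvert=\lvert G\rvert^2$, so the scalar prefactor $\sqrt{d_\sigma/\lvert X\rvert}$ in \eqref{isomTv} is exactly $\sqrt{d_{\rho_1}d_{\rho_2}d_\theta}/\lvert G\rvert$, which already reproduces the constant in the statement. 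The whole computation then reduces to evaluating $\langle v_1\otimes v_2\otimes w,\ \sigma(\mathbf g)B\rangle$ for a group element $\mathbf g$ carrying $x_0$ to the point $(g_1,g_2)$.

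First I would record two elementary facts about the identification of $V_1\otimes V_2\otimes W$ with anti-trilinear forms. Fact (a): under this identification the representation acts by
\[
[\sigma(a_1,a_2,h)B](v_1,v_2,w)=B(\rho_1(a_1^{-1})v_1,\rho_2(a_2^{-1})v_2,\theta(h^{-1})w),
\]
which follows on simple tensors from the unitarity of $\rho_1,\rho_2,\theta$ (so that $\langle\rho_i(a)\eta,v\rangle=\langle\eta,\rho_i(a^{-1})v\rangle$); in particular the $\widetilde{H}$-invariance condition displayed before the proposition is precisely $\sigma(h,h,h)B=B$. Fact (b): the Hilbert-Schmidt pairing is recovered from the form by
\[
\langle v_1\otimes v_2\otimes w,\ B\rangle=\overline{B(v_1,v_2,w)},
\]
which one checks by expanding in orthonormal bases $\{e_i\},\{f_j\},\{g_k\}$ of $V_1,V_2,W$: writing $B=\sum_{i,j,k}B(e_i,f_j,g_k)\,e_i\otimes f_j\otimes g_k$ and $v_1\otimes v_2\otimes w=\sum_{i,j,k}\langle v_1,e_i\rangle\langle v_2,f_j\rangle\langle w,g_k\rangle\,e_i\otimes f_j\otimes g_k$, both sides equal $\sum_{i,j,k}\overline{B(e_i,f_j,g_k)}\,\langle v_1,e_i\rangle\langle v_2,f_j\rangle\langle w,g_k\rangle$ by the conjugate-linearity of $B$ in each slot.

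Finally I would choose a convenient coset representative. By \eqref{e:azione} one has $(g_1g_2,g_2,1_H)(1_G,1_G)=(g_1g_2g_2^{-1},g_2)=(g_1,g_2)$, so $\mathbf g=(g_1g_2,g_2,1_H)$ sends $x_0$ to $(g_1,g_2)$, and the $\widetilde{H}$-invariance of $B$ guarantees that the value in \eqref{isomTv} does not depend on this choice. Substituting into \eqref{isomTv} and applying Fact (a) with $(a_1,a_2,h)=(g_1g_2,g_2,1_H)$ gives
\[
[\sigma(g_1g_2,g_2,1_H)B](v_1,v_2,w)=B(\rho_1(g_2^{-1}g_1^{-1})v_1,\rho_2(g_2^{-1})v_2,w),
\]
whereupon Fact (b) yields
\[
[T_B(v_1\otimes v_2\otimes w)](g_1,g_2)=\frac{\sqrt{d_{\rho_1}d_{\rho_2}d_\theta}}{\lvert G\rvert}\,\overline{B(\rho_1(g_2^{-1}g_1^{-1})v_1,\rho_2(g_2^{-1})v_2,w)},
\]
which is the asserted formula. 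The only genuinely delicate point is the bookkeeping: tracking the inverses produced by unitarity in Fact (a), the single complex conjugation coming from Fact (b), and verifying the coset representative under the slightly unusual action \eqref{e:azione}. Once these are pinned down the identity is immediate.
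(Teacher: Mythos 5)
Your proof is correct and takes essentially the same route as the paper's: the same prefactor bookkeeping $\sqrt{d_{\rho_1}d_{\rho_2}d_\theta}/\lvert G\rvert$ coming from $d_\sigma = d_{\rho_1}d_{\rho_2}d_\theta$ and $\lvert X\rvert = \lvert G\rvert^2$, the same coset representative $(g_1g_2,g_2,1_G)$ carrying $(1_G,1_G)$ to $(g_1,g_2)$ under the action \eqref{e:azione}, and the same two identities (unitarity to move the group element across the pairing, and $\langle v_1\otimes v_2\otimes w,B\rangle=\overline{B(v_1,v_2,w)}$). Your Facts (a) and (b) merely spell out, with an explicit basis verification, what the paper compresses into a single chain of equalities.
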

\begin{proof}
First note that
\[
\langle v_1\otimes v_2\otimes w,B\rangle=\overline{\langle B, v_1\otimes v_2\otimes w\rangle}=\overline{B(v_1,v_2, w\rangle}
\]
and that the last expression is {\em trilinear}. Moreover, recalling \eqref{e:azione}, we have 
$(g_3,g_4,h)(1_G,1_G)=(g_1,g_2)$ if we take $g_3=g_1g_2, g_4=g_2$ and $h=1_G$. 
By virtue of these simple considerations we may write \eqref{isomTv} explicitely as follows:
\[
\begin{split}
[T_B(v_1\otimes v_2\otimes w)](g_1,g_2)=&\frac{\sqrt{d_{\rho_1}d_{\rho_2}d_\theta}}{\lvert G\rvert}\left\langle v_1\otimes v_2\otimes w,[\rho_1(g_1g_2)\boxtimes \rho_2(g_2)\boxtimes \theta(1_G)]B\right\rangle\\
=&\frac{\sqrt{d_{\rho_1}d_{\rho_2}d_\theta}}{\lvert G\rvert}\left\langle \rho_1(g_2^{-1}g_1^{-1})v_1\otimes \rho_2(g_2^{-1})v_2\otimes w,B\right\rangle\\
=&\frac{\sqrt{d_{\rho_1}d_{\rho_2}d_\theta}}{\lvert G\rvert}\overline{B(\rho_1(g_2^{-1}g_1^{-1})v_1, \rho_2(g_2^{-1})v_2, w)}.
\end{split}
\]
\end{proof}

By combining the isometries in Theorem \ref{mainthm} and Proposition \ref{propTB} we obtain the following.
\begin{corollary}
The map $B\mapsto\widetilde{T_B}$ yields an isometry 
\[
\left(V_1\otimes V_2\otimes W\right)^{\widetilde{H}} \to \Hom_H\left(\Res^G_H(\rho_1\otimes\rho_2),\theta'\right).
\] 
Moreover, for $B \in V_1\otimes V_2\otimes W$, the intertwining operator $\widetilde{T_B}$ has the form:
\[
\left[\widetilde{T_B}(v_1\otimes v_2)\right](w)=\sqrt{d_{\rho_1}d_{\rho_2}}\;\overline{B(v_1,v_2,w)}
\] 
for all $v_1 \in V_1, v_2 \in V_2$ and $w \in W$.
\end{corollary}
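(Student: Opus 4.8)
The plan is to exhibit the map $B \mapsto \widetilde{T_B}$ as the composition of two maps that have already been shown to be isometries, and then to read off its explicit form by a direct substitution. First I would identify the map $B \mapsto T_B$ with the isometry \eqref{isomTv} specialized to the present permutation action: the ambient group is $G \times G \times H$, the point stabilizer of $(1_G,1_G)$ is $\widetilde{H}$, the homogeneous space is $X = G \times G$, and the relevant irreducible representation is $\rho_1 \boxtimes \rho_2 \boxtimes \theta$ on $V_1 \otimes V_2 \otimes W$. With these identifications, \eqref{isomTv} provides an isometry from the invariant subspace $(V_1 \otimes V_2 \otimes W)^{\widetilde{H}}$ onto $\Hom_{G \times G \times H}(\rho_1 \boxtimes \rho_2 \boxtimes \theta, \eta)$, and Proposition \ref{propTB} records its explicit form.

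Next I would invoke Theorem \ref{mainthm}, which supplies an isometric isomorphism $T \mapsto \widetilde{T}$ from $\Hom_{G \times G \times H}(\rho_1 \boxtimes \rho_2 \boxtimes \theta, \eta)$ onto $\Hom_H\left(\Res^G_H(\rho_1 \otimes \rho_2), \theta'\right)$. Composing the two isometries yields an isometry $B \mapsto \widetilde{T_B}$ with exactly the claimed domain and codomain, since a composition of isometries is again an isometry. This settles the first assertion.

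For the second assertion I would simply substitute. The defining formula for $\widetilde{T}$ in Theorem \ref{mainthm} reads $\left[\widetilde{T}(v_1 \otimes v_2)\right](w) = \frac{\lvert G\rvert}{\sqrt{d_\theta}}\,[T(v_1 \otimes v_2 \otimes w)](1_G,1_G)$. Applying this to $T = T_B$ and evaluating the expression of Proposition \ref{propTB} at $g_1 = g_2 = 1_G$, so that $\rho_1(g_2^{-1}g_1^{-1}) = \rho_2(g_2^{-1}) = I$, one finds that the normalizing factor $\frac{\sqrt{d_{\rho_1} d_{\rho_2} d_\theta}}{\lvert G\rvert}$ combines with $\frac{\lvert G\rvert}{\sqrt{d_\theta}}$; the $\lvert G\rvert$ cancels and $\sqrt{d_\theta}$ cancels, leaving precisely $\sqrt{d_{\rho_1} d_{\rho_2}}\,\overline{B(v_1,v_2,w)}$.

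There is essentially no obstacle here: all the representation-theoretic content is already carried by Theorem \ref{mainthm} and Proposition \ref{propTB}, and the corollary is their formal composition together with an elementary cancellation of scalar factors. The only point requiring a little care is the bookkeeping of the constants $\lvert G\rvert$, $\sqrt{d_\theta}$ and $\sqrt{d_{\rho_1} d_{\rho_2} d_\theta}$, to confirm that they collapse to the clean constant $\sqrt{d_{\rho_1} d_{\rho_2}}$.
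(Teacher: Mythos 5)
Your proposal is correct and follows exactly the paper's route: the paper proves the corollary precisely by combining the isometry $B \mapsto T_B$ of \eqref{isomTv} (made explicit in Proposition \ref{propTB}) with the isometric isomorphism $T \mapsto \widetilde{T}$ of Theorem \ref{mainthm}, and the explicit formula follows from the same substitution at $(1_G,1_G)$ with the cancellation $\frac{\lvert G\rvert}{\sqrt{d_\theta}}\cdot\frac{\sqrt{d_{\rho_1}d_{\rho_2}d_\theta}}{\lvert G\rvert}=\sqrt{d_{\rho_1}d_{\rho_2}}$ that you carried out. Your bookkeeping of the constants is accurate, so nothing is missing.
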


Clearly, the last Corollary may be proved directly following the same arguments in the proof of Theorem \ref{mainthm}. 

We end this section by providing an explicit expression of \eqref{sphchar} for the pair
$(G \times G \times H, \widetilde{H})$. This quite easy: the spherical character associated to the for the pair
$(G \times G \times H$-representation $\rho_1\boxtimes\rho_2\boxtimes\theta$ is given by:
\[
\begin{split}
\psi_{\rho_1\boxtimes\rho_2\boxtimes\theta}(g_1,g_2,h_1)=&\frac{1}{\lvert H\rvert}\sum_{h\in H}\overline{\chi_{\rho_1\boxtimes\rho_2\boxtimes\theta}(hg_1,hg_2,hh_1)}\\
=&\frac{1}{\lvert H\rvert}\sum_{h\in H}\overline{\chi_{\rho_1}(hg_1)}\:\overline{\chi_{\rho_2}(hg_2)}\;\overline{\chi_{\theta}(hh_1)}
\end{split}
\]
for all $g_1, g_2 \in G$ and $h_1 \in H$.
In particular, \eqref{sphcharmult} and the isomorphisms above yield
\[
\begin{split}
\psi_{\rho_1\boxtimes\rho_2\boxtimes\theta}(1_G,1_G,1_G)=&
\dim\left(V_1\times V_2\times W\right)^{\widetilde{H}}\\
=&\dim\Hom_{G\times G\times H}(\rho_1\boxtimes \rho_2\boxtimes \theta,\eta)\\
=&\dim\Hom_H\left(\Res^G_H(\rho_1\otimes\rho_2),\theta'\right)\\
\equiv&\text{ multiplicity of } \theta'\text{ in }\Res^G_H(\rho_1\otimes\rho_2).
\end{split}
\]
The fact that $\psi_{\rho_1\boxtimes\rho_2\boxtimes\theta}(1_G,1_G,1_G)$ equals the
multiplicity of $\theta'$ in $\Res^G_H(\rho_1\otimes\rho_2)$ could also be recovered from the explicit expression of 
$\psi_{\rho_1\boxtimes\rho_2\boxtimes\theta}(1_G,1_G,1_G)$ which is nothing but the scalar product between the character of $\theta'$ and the character of $\Res^G_H(\rho_1\otimes\rho_2)$.

%%%%%%%%%%%%%%%%%%%%%%%%%%%%%%%%%%%%%%%%%%%%%%%%%
\section{Harmonic analysis on Clifford groups}
\label{s:clifford}
%%%%%%%%%%%%%%%%

In this section we consider the Clifford groups $G=\CL(n)$ and we study the decomposition of the tensor product
of two irreducible representations and its restriction to the subgroup $H=\CL(n-1)$.

\subsection{Clifford groups and their irreducible representations}
Let $n \in \NN$ and set $X_n = \{1,2,\ldots,n\}$. Denote by $\CL(n)$ the \emph{Clifford group} of degree $n$.
Recall that $\CL(n) = \{\pm\gamma_A: A \subseteq X_n\}$ with multiplication given by
\[
\varepsilon_1 \gamma_A \cdot \varepsilon_2 \gamma_B = \varepsilon_1 \varepsilon_2 (-1)^{\xi(A,B)} \gamma_{A \triangle B}
\]
where $\triangle$ denotes the symmetric difference of two sets and  $\xi(A,B)$ equals the number of elements $(a,b) \in A \times B$ such that $a > b$, for all $\varepsilon_1, \varepsilon_2 \in \{1,-1\}$ and $A,B \subseteq X_n$. Notice that the identity element is given by $1 = \gamma_\varnothing$ and
that $(\varepsilon\gamma_A)^{-1} = \varepsilon (-1)^\frac{|A|(|A|-1)}{2}\gamma_{A}$ for all $\varepsilon = \pm 1$
and $A \subseteq X_n$.

It is well known (cf. \cite[Section IV.3]{Simon}) that $\CL(n)$ admits exactly $2^n$ one-dimensional representations,
namely $\chi_A$, $A \subseteq X_n$ given by
\begin{equation}
\label{e:chiA}
\chi_A(\pm\gamma_B) = (-1)^{|A \cap B|}
\end{equation}
for all $B \subseteq X_n$, and, if $n$ is even, there is only one irreducible representation $\rho_n$, of
dimension $2^{n/2}$ whose character is given by
\begin{equation}
\label{e:chirhon}
\chi_{\rho_n}(\pm\gamma_B) = \pm \delta_{B,\varnothing}2^{n/2}
\end{equation}
for all $B \subseteq X_n$, and, if $n$ is odd, say $n = 2m+1$, there are exactly two irreducible representation $\rho_n^\pm$, of
dimension $2^{m}$ whose characters are given by
\begin{equation}
\label{e:chirhonpm}
\chi_{\rho_n^\pm}(\gamma_B) = 
\begin{cases} 2^m & \mbox{ if } B = \varnothing\\
0 & \mbox{ if } B \neq \varnothing, X_n\\
\pm c 2^m & \mbox{ if } B = X_n
\end{cases}
\end{equation}
where
\begin{equation}
\label{e:qua}
c = \begin{cases} 1 & \mbox{ if } m \equiv 0 \mod 2\\
-i & \mbox{ if } m \equiv 1 \mod 2
\end{cases}
\end{equation}
and
\begin{equation}
\label{e:trestelle}
\chi_{\rho_n^\pm}(-\gamma_B) = -\chi_{\rho_n^\pm}(\gamma_B) 
\end{equation}
for all $B \subseteq X_n$.

\subsection{Kronecker products of irreducible representations of $\CL(n)$}
\label{ss:clifford-Kp}
In this section we study the decomposition of the Kronecker products of irreducible representations
of $\CL(n)$ and of their restriction to the subgroup $\CL(n-1)$, yielding the proof of Theorem \ref{t:cln-wsr}.

\begin{proof}[Proof of Theorem \ref{t:cln-wsr}.(i)]
By Corollary \ref{c:mc1} for $H = G$, this is equivalent to
prove that the tensor product of any two irreducible representations of $\CL(n)$ decomposes multiplicitly free. 
To this end, we distinguish the two cases corresponding to the parity of $n$.

Suppose first that $n$ is even. By virtue of Lemma \ref{l:filippo} we only need to analyse the decomposition of
$\rho_n \otimes \rho_n$. Suppose $\theta \in \widehat{\CL(n)}$. Then we have
\[
\begin{split}
\langle \chi_{\rho_n \otimes \rho_n}, \chi_\theta \rangle & =
\frac{1}{2^{n+1}} \sum_{g \in \CL(n)} \chi_{\rho_n}(g)^2\overline{\chi_\theta(g)}\\ 
& = \frac{1}{2^{n+1}} \left[ \chi_{\rho_n}(1)^2\overline{\chi_\theta(1)}+  \chi_{\rho_n}(-1)^2\overline{\chi_\theta(-1)}\right]\\
& = \frac{1}{2^{n+1}} \left[ 2^n\overline{\chi_\theta(1)} +  2^n\overline{\chi_\theta(-1)}\right]\\
& = \begin{cases}
1 & \mbox{if  } \theta = \chi_A, \ A  \subseteq  X_n,  \ \mbox{by  \eqref{e:chiA}}\\
0  & \mbox{if  } \theta = \rho_n,  \  \  \   \  \  \   \  \  \   \  \  \   \  \mbox{by  \eqref{e:chirhon}}.
\end{cases}
\end{split}
\]
It follows that 
\begin{equation}
\label{e:n-n-A}
\rho_n\otimes \rho_n =\bigoplus_{A \subseteq X_n}\chi_A.
\end{equation}
Suppose now that $n$ is odd, say $n = 2m+1$.  As before, we need only to analyze $\rho_n^+\otimes \rho_n^+$,  $\rho_n^+\otimes \rho_n^-$  and  $\rho_n^-\otimes \rho_n^-$ and show that they decompose  multiplicity free.  Suppose $\theta \in \widehat{\CL(n)}$. Then we have
\[
\begin{split}
\langle \chi_{\rho_n \otimes \rho_n}, \chi_\theta \rangle & =
\frac{1}{2^{n+1}} \sum_{g \in \CL(n)} \chi_{\rho_n}(g)^2\overline{\chi_\theta(g)}\\ 
& = \frac{1}{2^{n+1}} \left[ \chi_{\rho_n}(1)^2\overline{\chi_\theta(1)}+  \chi_{\rho_n}(-1)^2\overline{\chi_\theta(-1)} + \right.\\
& \ \ \ \ \ \  \ \ \ \ \  +\left. \chi_{\rho_n}(\gamma_{X_n})^2\overline{\chi_\theta(\gamma_{X_n})}+  \chi_{\rho_n}(-\gamma_{X_n})^2\overline{\chi_\theta(-\gamma_{X_n})}\right]\\
& = \frac{1}{2^{n+1}} \left[ 2^{2m}(\overline{\chi_\theta(1)} +  \overline{\chi_\theta(-1)}) + c^22^{2m}(\overline{\chi_\theta(\gamma_{X_n})}+ \overline{\chi_\theta(-\gamma_{X_n})})\right]\\
& = \begin{cases}
1 & \mbox{if  } \theta = \chi_A,   \begin{array}{l}\mbox{with }|A| \mbox{ even and }  m\equiv 0  \mod 2\\
\mbox{or   } \ \ \  |A| \mbox{ odd and } \  m\equiv 1 \mod 2
\end{array}  \\
0  & \mbox{ otherwise }
\end{cases}
\end{split}
\]
where the last equality follows from \eqref{e:chirhon}, \eqref{e:trestelle} and \eqref{e:qua}.
It follows  that 
\begin{equation}
\label{e:tensore2}
\rho_n^+\otimes \rho_n^+ = \begin{cases}
\bigoplus_{|A| \mbox{ \small even}}\chi_A & \mbox{if } m \equiv 0 \mod 2\\
\bigoplus_{|A| \mbox{ \small odd}}\chi_A & \mbox{if } m \equiv 1 \mod 2.\\
\end{cases}
\end{equation}
Similarly we have 
\begin{equation}
\label{e:tensore3}
\rho_n^+\otimes \rho_n^- = \begin{cases}
\bigoplus_{|A| \mbox{ \small odd}}\chi_A & \mbox{if } m \equiv 0 \mod 2\\
\bigoplus_{|A| \mbox{ \small even}}\chi_A & \mbox{if } m \equiv 1 \mod 2,\\
\end{cases}
\end{equation}
while  
\begin{equation}
\label{e:tensore4}
\rho_n^-\otimes \rho_n^-  \sim \rho_n^+\otimes \rho_n^+.
\end{equation} 
\end{proof}

\begin{proof}[Proof of Theorem \ref{t:cln-wsr}.(ii)]
By Corollary \ref{c:mc1} this is equivalent to showing that the restriction to the subgroup $\CL(n-1)$
of the tensor product of any two irreducible representations of $\CL(n)$ decomposes multiplicitly free
if and only if $n$ is odd.

We first observe that 
\begin{equation}
\label{e:chi-diff-sim}
\chi_A \otimes \chi_B = \chi_{A \triangle B}
\end{equation}
and, if $n$ is even,
\begin{equation}
\label{e:tensor1}
\chi_A \otimes \rho_n = \rho_n
\end{equation}
while, if $n$ is odd,
\begin{equation}
\label{e:tensor-giusta}
\chi_A \otimes \rho_n^\pm = \begin{cases} \rho_n^\pm & \mbox{ if $|A|$ is even}\\
\rho_n^\mp & \mbox{ if $|A|$ is odd}
\end{cases}
\end{equation}
for all $A, B \subseteq X_n$.

Moreover,
\begin{equation}
\label{e:res-simple}
\Res^{\CL(n)}_{\CL(n-1)}\chi_A = \chi_{A \setminus \{n\}}
\end{equation}
for all $A \subseteq X_n$, and if $n$ is even,
\begin{equation}
\label{e:res-rho-piu}
\Res^{\CL(n)}_{\CL(n-1)}\rho_n = \rho_{n-1}^{+} \oplus \rho_{n-1}^{-},
\end{equation}
while, if $n$ is odd,
\begin{equation}
\label{e:res-rho}
\Res^{\CL(n)}_{\CL(n-1)}\rho_n^{\pm} = \rho_{n-1}.
\end{equation}

Collecting all these facts together, we deduce the following:
\begin{itemize}
\item $\Res^{\CL(n)}_{\CL(n-1)}\left(\chi_A \otimes \chi_B\right) = \chi_{(A \triangle B) \setminus \{n\}}$ (by
\eqref{e:chi-diff-sim} and \eqref{e:res-simple});
\item $\Res^{\CL(n)}_{\CL(n-1)}\left(\chi_A \otimes \rho_n\right) = \rho_{n-1}^{+} \oplus \rho_{n-1}^{-}$, if $n$ is even (by \eqref{e:tensor1} and \eqref{e:res-rho-piu});
\item $\Res^{\CL(n)}_{\CL(n-1)}\left(\chi_A \otimes \rho_n^\pm\right) = \rho_{n-1}$, if $n$ is odd (by \eqref{e:tensor-giusta} and \eqref{e:res-rho});
\item $\Res^{\CL(n)}_{\CL(n-1)}\left(\rho_{n} \otimes \rho_n\right) =  2 \bigoplus_{A \subseteq X_{n-1}} \chi_A$, if $n$ is even (by \eqref{e:n-n-A} and \eqref{e:res-simple});
\item $\Res^{\CL(n)}_{\CL(n-1)}\left(\rho_{n-1}^{\pm} \otimes \rho_{n-1}^{\pm}\right)= \Res^{\CL(n)}_{\CL(n-1)}\left(\rho_{n-1}^{\pm} \otimes \rho_{n-1}^{\mp}\right) =  \bigoplus_{A \subseteq X_{n-1}} \chi_A$, if $n$ is odd (by \eqref{e:tensore2}, \eqref{e:tensore3}, \eqref{e:tensore4} and \eqref{e:res-simple}).
\end{itemize}
The proof is now complete.
\end{proof}

\subsection{The orbits of $\CL(n)$ on $\CL(n) \times \CL(n)$}
\label{ss:clifford-orbits}
In this section we study the orbits of $\CL(n)$ acting by conjugation on the Cartesian product
$\CL(n) \times \CL(n)$. The interest for such analysis is motivated by the fact that spherical characters are
constant on each orbit.

Let $A,C \subseteq X$. Then we have
\begin{equation}
\label{e:18nov}
\begin{split}
\gamma_C^{-1} \gamma_A \gamma_C & = (-1)^{\frac{|C|(|C|-1)}{2} + \xi(A,C) + \xi(C, A \triangle C)} \gamma_{C \triangle(A \triangle C)}\\
& = (-1)^{\frac{|C|(|C|-1)}{2} + \xi(A,C) + \xi(C,A) + \xi(C,C)}\gamma_A\\
& =_{*} (-1)^{|A||C| - |A \cap C|}\gamma_A
\end{split}
\end{equation}
where $=_*$ follows from the fact that $\xi(C,C) = \frac{|C|(|C|-1)}{2}$. 

\begin{theorem}
Let $\emptyset \subseteq A,B \subseteq X_n$ and denote by $\mathcal{O}_n(\pm\gamma_A, \pm\gamma_B)$
the $\CL(n)$-orbit of $(\pm\gamma_A, \pm\gamma_B)$.  

\begin{enumerate}[{\rm (a)}]
\item{$\mathcal{O}_n(\pm\gamma_\emptyset, \pm\gamma_\emptyset) = \{(\pm\gamma_\emptyset, \pm\gamma_\emptyset)\}$;}
\item{if $A \neq B$ and $A, B \neq \emptyset, X_n$ then 
\begin{equation}
\label{e:orbita-1}
\mathcal{O}_n(\pm\gamma_A, \pm\gamma_B) = \{(\gamma_A, \gamma_B), (-\gamma_A, -\gamma_B), (-\gamma_A, \gamma_B), (\gamma_A, -\gamma_B)\}
\end{equation}
unless $n$ is odd and $A\coprod B = X_n$ (here and in the sequel $\coprod$ denotes a disjoint union) in which case \begin{equation}
\label{e:orbita-2}
\mathcal{O}_n(\pm\gamma_A, \pm\gamma_B) =\{(\pm\gamma_A, \pm\gamma_B), (\mp\gamma_A, \mp\gamma_B)\};
\end{equation}}
\item{if $A \neq \emptyset$ then 
\[
\mathcal{O}_n(\pm\gamma_\emptyset, \pm\gamma_A) = \{(\pm\gamma_\emptyset, \pm\gamma_A), (\pm\gamma_\emptyset, \mp\gamma_A)\}
\]
unless $n$ is odd and $A= X_n$, in which case $\mathcal{O}_n(\pm\gamma_\emptyset, \pm\gamma_{X_n}) = \{(\pm\gamma_\emptyset, \pm\gamma_{X_n})\}$. The analogous result holds for $\mathcal{O}_n(\pm\gamma_A, \pm\gamma_\emptyset)$;}
\item{if  $A \neq \emptyset$ then 
\[
\mathcal{O}_n(\pm\gamma_A, \pm\gamma_A) = \{(\pm\gamma_A, \pm\gamma_A), (\mp\gamma_A, \mp\gamma_A)\}
\]
unless $n$ is odd and $A= X_n$, in which case $\mathcal{O}_n(\pm\gamma_{X_n}, \pm\gamma_{X_n}) = \{(\pm\gamma_{X_n}, \pm\gamma_{X_n})\}$.}
\end{enumerate}
\end{theorem}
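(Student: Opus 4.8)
The plan is to reduce the entire statement to the single conjugation formula \eqref{e:18nov}. Since $-1$ is central in $\CL(n)$, conjugation by an arbitrary element $\delta\gamma_C$ (with $\delta=\pm1$) coincides with conjugation by $\gamma_C$, and \eqref{e:18nov} gives $(\delta\gamma_C)^{-1}(\varepsilon\gamma_A)(\delta\gamma_C)=\varepsilon\,s_A(C)\,\gamma_A$, where I set $s_A(C)=(-1)^{|A||C|-|A\cap C|}$. Thus conjugation never alters the underlying sets: along the whole $\CL(n)$-orbit of a pair $(\varepsilon_1\gamma_A,\varepsilon_2\gamma_B)$ the sets $A$ and $B$ stay fixed and only the two signs move, via $(\varepsilon_1,\varepsilon_2)\mapsto(\varepsilon_1 s_A(C),\varepsilon_2 s_B(C))$. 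Consequently the orbit is in bijection with the image
\[
\Gamma_{A,B}=\{(s_A(C),s_B(C)):C\subseteq X_n\}\subseteq\{+1,-1\}^2,
\]
and the theorem reduces to identifying this set in each listed configuration.

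Next I would observe that $\Gamma_{A,B}$ is in fact a subgroup, because the exponent is $\mathbb{F}_2$-linear in $C$. Writing $c_j=[j\in C]$ and reducing modulo $2$, one has $|A||C|-|A\cap C|\equiv\sum_{j\in X_n}\bigl(|A|-[j\in A]\bigr)c_j\pmod 2$, so $C\mapsto s_A(C)$ is a homomorphism from $(\mathcal{P}(X_n),\triangle)$ to $\{\pm1\}$, and hence so is $C\mapsto(s_A(C),s_B(C))$. In particular $|\Gamma_{A,B}|\in\{1,2,4\}$, and it suffices to decide, for each configuration, whether $s_A$ and $s_B$ are trivial and whether they coincide.

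I would then settle these two questions by inspecting the linear coefficients $|A|-[j\in A]\pmod 2$. The functional $s_A$ is trivial exactly when all these coefficients vanish: this holds for $A=\emptyset$ (all coefficients $0$) and for $A=X_n$ precisely when $n$ is odd (coefficients $n-1$), whereas for $A\neq\emptyset,X_n$ one may choose $j\in A$ and $j'\notin A$, obtaining coefficients $|A|-1$ and $|A|$, which cannot both be even, so $s_A$ is nontrivial. Comparing the coefficients of $s_A$ and $s_B$ coordinatewise, equality $s_A=s_B$ forces $|A|\equiv|B|$ on $(A\cap B)\cup(X_n\setminus(A\cup B))$ and $|A|\not\equiv|B|$ on $A\triangle B$; these two demands are compatible only when one of the two index sets is empty, which together with $A\neq B$ pins down $A\cap B=\emptyset$ and $A\cup B=X_n$, i.e. $B=X_n\setminus A$ with $|A|\not\equiv|B|$, that is $A\coprod B=X_n$ and $n$ odd. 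Hence $s_A=s_B$ iff $A=B$ or ($B=X_n\setminus A$ and $n$ odd).

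Finally I would assemble the four cases by reading off $\Gamma_{A,B}$ from the previous two paragraphs. If both $s_A,s_B$ are trivial then $\Gamma_{A,B}=\{(+1,+1)\}$ and the orbit is a singleton, covering (a) together with the singleton exceptions in (c) and (d) in which the nonempty set equals $X_n$ and $n$ is odd. If exactly one of $s_A,s_B$ is trivial, the orbit has two elements differing by a single sign flip, yielding the generic statement of (c). If both are nontrivial and equal, $\Gamma_{A,B}$ is the diagonal $\{(+1,+1),(-1,-1)\}$, producing the synchronized two-element orbit of generic (d) and of the exceptional complementary case \eqref{e:orbita-2} of (b). If both are nontrivial and distinct, then they are $\mathbb{F}_2$-independent, so $\Gamma_{A,B}=\{+1,-1\}^2$ and the orbit is the four-element set \eqref{e:orbita-1}. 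The only delicate point is the bookkeeping of the preceding paragraph identifying $s_A=s_B$ with $A\coprod B=X_n$ and $n$ odd; once that coincidence is pinned down, every case is immediate.
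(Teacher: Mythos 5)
Your argument is correct, and it takes a genuinely different route from the paper's. The paper proves only part (b), which it declares the hardest, by a hands-on case analysis on the parities of $\lvert A\rvert$ and $\lvert B\rvert$ and on the configuration of $A$ and $B$ (neither contained in the other, $A\subseteq B$, $A\cup B\neq X_n$, $A\cup B=X_n$ with or without intersection), in each sub-case exhibiting explicit conjugators $C$ of cardinality one or two that realize the sign patterns in \eqref{e:orbita-1}, and treating the disjoint-cover case separately to get \eqref{e:orbita-2}; parts (a), (c), (d) are left to the reader as ``similar''. You instead linearize the exponent in \eqref{e:18nov}, writing $\lvert A\rvert\lvert C\rvert-\lvert A\cap C\rvert\equiv\sum_{j\in X_n}(\lvert A\rvert-[j\in A])c_j \pmod 2$, so that $C\mapsto s_A(C)$ is a character of $(\mathcal{P}(X_n),\triangle)\cong\mathbb{F}_2^n$; since $-1$ is central, the orbit of $(\varepsilon_1\gamma_A,\varepsilon_2\gamma_B)$ is exactly the coset of the subgroup $\Gamma_{A,B}\leq\{\pm1\}^2$ through $(\varepsilon_1,\varepsilon_2)$ (your phrase ``in bijection with $\Gamma_{A,B}$'' should really say coset, but your case assembly uses it correctly). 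The whole theorem then reduces to two clean classifications, both of which you carry out correctly: $s_A$ is trivial iff $A=\emptyset$ or ($A=X_n$ and $n$ odd), and $s_A=s_B$ iff $A=B$ or ($A\coprod B=X_n$ and $n$ odd); the only step you compress is that two \emph{distinct nontrivial} characters of $\mathbb{F}_2^n$ are jointly surjective onto $\{\pm1\}^2$, which is the standard independence fact you invoke. What each approach buys: yours is uniform (all four parts, including every exceptional case, fall out of one computation), and it explains structurally why orbit sizes can only be $1$, $2$ or $4$ (subgroups of the Klein four-group) and why the exceptions occur precisely at $A=\emptyset$ or $A=X_n$ with $n$ odd; the paper's method, by contrast, produces concrete small conjugating sets $C$ in each situation, which is convenient for explicit computations with specific group elements, at the cost of a longer sub-case bookkeeping and of omitting three of the four parts.
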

\begin{proof} We limit ourselves to prove (b) which is the most involved case: the remaining ones are simpler and can be treated in a similar manner.
Suppose first that $|A|$ and $|B|$ are both even. We distinguish two cases. (i) None of $A$ and $B$ is contained
in the other. Let then $a \in A \setminus B$ and $b \in B \setminus A$. Then taking $C = \{a,b\}$ (resp.
$\{a\}$, resp. $\{b\}$) and using \eqref{e:18nov} gives $\gamma_C(\gamma_A, \gamma_B)\gamma_C^{-1} =
(-\gamma_A, -\gamma_B)$ (resp. $(-\gamma_A, \gamma_B)$, resp. $(\gamma_A, -\gamma_B)$). (ii) Suppose
$A \\subseteq B$ (the specular case $B \\subseteq A$ is treated in the same way) and let $a \in A$ and $b \in B \setminus A$. Then taking $C = \{a\}$ (resp. $\{a,b\}$, resp. $\{b\}$) and using \eqref{e:18nov} gives $\gamma_C(\gamma_A, \gamma_B)\gamma_C^{-1} = (-\gamma_A, -\gamma_B)$ (resp. $(-\gamma_A, \gamma_B)$, resp. $(\gamma_A, -\gamma_B)$).
Thus \eqref{e:orbita-1} follows in both cases.
If $|A|$ and $|B|$ are both odd, similar calculations yield  again \eqref{e:orbita-1}.
We now consider the remaining case, namely when $|A|$ and $|B|$ have different parity. To fix ideas we suppose
that $|A|$ is even and $|B|$ is odd. We distinguish three cases.
(i) $A \cup B \neq X_n$ and $A \setminus B \neq \varnothing$. Let then $a \in A \setminus B$ and $c \in X_n \setminus (A \cup B)$. Then taking $C = \{a\}$ (resp. $\{a,c\}$, resp. $\{c\}$) and using \eqref{e:18nov} gives $\gamma_C(\gamma_A, \gamma_B)\gamma_C^{-1} = (-\gamma_A, -\gamma_B)$ (resp. $(-\gamma_A, \gamma_B)$, resp. $(\gamma_A, -\gamma_B)$). (ii) $A \\subseteq B$. Let then $a \in A$, $b \in B \setminus A$ and $c \in X_n \setminus B$.
Then taking $C = \{a,c\}$ (resp. $\{a\}$, resp. $\{b,c\}$) and using \eqref{e:18nov} gives $\gamma_C(\gamma_A, \gamma_B)\gamma_C^{-1} = (-\gamma_A, -\gamma_B)$ (resp. $(-\gamma_A, \gamma_B)$, resp. $(\gamma_A, -\gamma_B)$). 
(iii) $A \cup B = X_n$. Suppose that $A \cap B \neq \varnothing$. Let then $a \in A \setminus B$ and $d \in A \cap B$ and set $C = \{a\}$ (resp. $\{d\}$, resp. $\{a,d\}$). We have $\gamma_C(\gamma_A, \gamma_B)\gamma_C^{-1} = (-\gamma_A, -\gamma_B)$ (resp. $(-\gamma_A, \gamma_B)$, resp. $(\gamma_A, -\gamma_B)$) and \eqref{e:orbita-1} follows also in this case. On the other hand, if $A \cap B = \varnothing$ (so that necessarily $n = |A| + |B|$ is odd) and $C \subseteq X_n$, then if $|C \cap A|$ is even (resp. odd) then $\gamma_C(\gamma_A, \gamma_B)\gamma_C^{-1} = (\gamma_A, \gamma_B)$
(resp. $\gamma_C(\gamma_A, \gamma_B)\gamma_C^{-1} = (-\gamma_A, -\gamma_B)$) and \eqref{e:orbita-2} follows.
\end{proof}

\subsection{The spherical characters for $(\CL(n) \times\CL(n) \times\CL(n), \widetilde{\CL(n)})$}
\label{ss:clifford-sc}
In this section we study the spherical characters associated with the Gelfand pair $(\CL(n) \times\CL(n) \times\CL(n), \widetilde{\CL(n)})$. We start with a simple preliminary combinatorial result. This could be immediately deduced from
the orthogonality relations for the characters, but we prefer to give a direct proof.
\begin{lemma}
\label{l:piccolo}
Let $U \subseteq X_n$. Then
\begin{equation}
\label{e:piccola}
\frac{1}{2^n} \sum_{D \subseteq X_n} (-1)^{\vert U \cap D\vert} =
\begin{cases} 1 & \mbox{ if } U = \varnothing\\
0 &   \mbox{ otherwise.}
\end{cases}
\end{equation}
\end{lemma}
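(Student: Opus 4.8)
The plan is to reduce the identity to a factorization over the individual elements of $X_n$. The key observation is that $\lvert U \cap D\rvert = \sum_{x \in U} \mathbf{1}[x \in D]$, so that $(-1)^{\lvert U \cap D\rvert}$ splits as a product $\prod_{x \in U}(-1)^{\mathbf{1}[x \in D]}$ depending only on which elements of $U$ lie in $D$. Summing over all $D \subseteq X_n$ then amounts to letting the membership indicator $\mathbf{1}[x \in D]$ range independently over $\{0,1\}$ for each $x \in X_n$, which lets me interchange the sum and the product and write
\[
\sum_{D \subseteq X_n}(-1)^{\lvert U \cap D\rvert} = \prod_{x \in X_n}\left(\sum_{\varepsilon \in \{0,1\}}(-1)^{\mathbf{1}[x \in U]\,\varepsilon}\right).
\]

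First I would evaluate each factor. For $x \notin U$ the inner sum is $1 + 1 = 2$, while for $x \in U$ it is $1 + (-1) = 0$. Hence the product equals $2^n$ when $U = \varnothing$ (every factor contributes a $2$) and equals $0$ as soon as $U \neq \varnothing$ (a single vanishing factor kills the whole product). Dividing by $2^n$ gives precisely the two cases in \eqref{e:piccola}, completing the argument.

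An equally short alternative, which I would mention as a remark, avoids the factorization entirely and uses a sign-reversing involution: when $U \neq \varnothing$, fix any $u \in U$ and pair each $D \subseteq X_n$ with $D \triangle \{u\}$. This pairing is a fixed-point-free involution on the power set of $X_n$, and passing from $D$ to $D \triangle \{u\}$ flips the parity of $\lvert U \cap D\rvert$, so the two terms in each pair cancel and the sum vanishes; the case $U = \varnothing$ is trivial since every summand equals $1$. I do not expect any genuine obstacle here — the only point requiring a word of care is the boundary case $U = \varnothing$, where the product is empty (equivalently, the pairing argument does not apply) and the sum collapses to $2^n$, yielding the value $1$.
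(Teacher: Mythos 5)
Your proof is correct, but your primary argument takes a genuinely different route from the paper's. The paper proves the nontrivial case $U \neq \varnothing$ by a counting bijection: it fixes $u \in U$, splits the power set of $X_n$ into the classes $\{A : \lvert A \cap U\rvert \text{ even}\}$ and $\{A : \lvert A \cap U\rvert \text{ odd}\}$, and exhibits the map $A \mapsto A \setminus \{u\}$ or $A \cup \{u\}$ (i.e.\ toggling $u$) as a bijection between them, so the two classes have equal cardinality and the signed sum vanishes. Your ``remark'' alternative --- the fixed-point-free sign-reversing involution $D \mapsto D \triangle \{u\}$ --- is exactly this argument in involutive form, so that part coincides with the paper. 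Your main argument, by contrast, factors the sum as $\prod_{x \in X_n}\bigl(\sum_{\varepsilon \in \{0,1\}}(-1)^{\mathbf{1}[x \in U]\varepsilon}\bigr)$ and evaluates each factor as $2$ or $0$; this is in effect the orthogonality relation for the characters of $(\mathbb{Z}/2\mathbb{Z})^n$ computed by hand, which is interesting because the paper explicitly notes that the lemma ``could be immediately deduced from the orthogonality relations for the characters'' but chooses the direct bijective proof instead. The factorization buys a one-line computation that generalizes readily (e.g.\ to sums of roots of unity over other abelian groups), while the paper's pairing is purely combinatorial and requires no interchange of sum and product. One cosmetic slip: in your closing remark you say the product is ``empty'' when $U = \varnothing$, which fits your first splitting $\prod_{x \in U}(-1)^{\mathbf{1}[x \in D]}$ but not your displayed product over all $x \in X_n$, where every factor simply equals $2$; either reading gives $2^n$, so nothing is at stake.
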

\begin{proof} If $U = \varnothing$, we have $\vert U \cap D\vert = \varnothing$ for every $D \subseteq X_n$ so that
each summand in the Left Hand Side of \eqref{e:piccola} equals $1$ and therefore the whole Left Hand Side of \eqref{e:piccola} equals $1$. Suppose now that $U\neq\varnothing$ and fix $u \in U$. Consider the sets
$\PP_0 = \{A \subseteq X_n: \vert A \cap U \vert \mbox{ is even}\}$ and $\PP_1 = \{A \subseteq X_n: \vert A \cap U \vert \mbox{ is odd}\}$. Since the map $\Phi \colon \PP_0 \to \PP_1$ defined by
\[
\Phi(A) = \begin{cases} A \setminus \{u\} & \mbox{ if } u \notin A\\
A \cup \{u\} & \mbox{ otherwise}
\end{cases}
\]
is bijective, we have $\vert \PP_0 \vert = \vert \PP_1 \vert$. As a consequence,
\[
\frac{1}{2^n} \sum_{D \subseteq X_n} (-1)^{\vert U \cap D\vert} = \frac{1}{2^n} \left(\sum_{A \in \PP_0} (-1)^{\vert U \cap D\vert} + \sum_{A \in \PP_1} (-1)^{\vert U \cap D\vert}\right) = \frac{1}{2^n} \left(\vert \PP_0 \vert - \vert \PP_1 \vert\right) = 0.
\]
\end{proof}

The spherical character on $\CL(n) \times\CL(n) \times\CL(n)$ associated with the representation $\chi_A \boxtimes \chi_B \boxtimes \chi_C$, for $A,B,C \subseteq X_n$, is given by
\begin{equation}
\label{e:dpiccola}
\psi_{\chi_A \boxtimes \chi_B \boxtimes \chi_C}(\varepsilon_1\gamma_{T_1},\varepsilon_2\gamma_{T_2},\varepsilon_3\gamma_{T_3}) = \frac{1}{2^n} \sum_{D \subseteq X_n} (-1)^{\vert A \cap(D \triangle T_1)\vert+ \vert B \cap(D \triangle T_2)\vert+ \vert C \cap(D \triangle T_3)\vert}
\end{equation}
for all $\varepsilon_i = \pm 1$ and $T_i \subseteq X_n$, $i=1,2,3$.
In particular (when $T_1 = T_2 = T_3 = \varnothing$), we have
\begin{equation}
\label{e:buona}
\psi_{\chi_A \boxtimes \chi_B \boxtimes \chi_C}(1,1,1) = \begin{cases} 1 & \mbox{ if } C = A \triangle B\\
0 & \mbox{ otherwise.}
\end{cases}
\end{equation}
Indeed, in this case, \eqref{e:dpiccola} gives
\[
\begin{split}
\psi_{\chi_A \boxtimes \chi_B \boxtimes \chi_C}(1,1,1) & = \frac{1}{2^n} \sum_{D \subseteq X_n} (-1)^{\vert A \cap D\vert + \vert B \cap D\vert + \vert C \cap D\vert}\\
& = \frac{1}{2^n} \sum_{D \subseteq X_n} (-1)^{\vert (A \triangle B) \cap D\vert + 2 \vert A \cap B \cap D \vert + \vert C \cap D\vert}\\
& = \frac{1}{2^n} \sum_{D \subseteq X_n} (-1)^{\vert ((A \triangle B) \triangle C) \cap D\vert + 2\vert ((A \triangle B) \cap C) \cap D\vert}\\
& = \frac{1}{2^n} \sum_{D \subseteq X_n} (-1)^{\vert ((A \triangle B) \triangle C) \cap D\vert}
\end{split}
\]
and \eqref{e:buona} follows from Lemma \ref{l:piccolo} by taking $U = (A \triangle B) \triangle C$.

Suppose now that $n$ is even. 
The spherical character on $\CL(n) \times\CL(n) \times\CL(n)$ associated with the representation $\rho_n \boxtimes \rho_n \boxtimes \rho_n$ is trivial.
Indeed,
\[
\begin{split}
\psi_{\rho_n \boxtimes \rho_n \boxtimes \rho_n}(\varepsilon_1\gamma_{T_1}&,\varepsilon_2\gamma_{T_2} ,\varepsilon_3\gamma_{T_3}) = 
\frac{1}{2^{n+1}} \sum_{\varepsilon = \pm 1} \sum_{D \subseteq X_n} \prod_{i=1}^3\chi_{\rho_n}(\varepsilon \varepsilon_i \gamma_D \gamma_{T_i})\\
& = \begin{cases}
\frac{1}{2^{n+1}} \sum\nolimits_{\varepsilon = \pm 1} \prod_{i=1}^3\chi_{\rho_n}(\varepsilon \varepsilon_i (-1)^{\xi(T_i,T_i)}\gamma_\varnothing) & \mbox{ if } T_1 = T_2 = T_3\\
0 & \mbox{ otherwise}
\end{cases}\\
& = \begin{cases}
\sum_{\varepsilon = \pm 1} \varepsilon \left(\frac{1}{2^{n+1}} \prod_{i=1}^3 (\varepsilon_i (-1)^{\xi(T_i,T_i)} 2^{n/2})\right) & \mbox{ if } T_1 = T_2 = T_3\\
0 & \mbox{ otherwise}
\end{cases}\\
& = 0
\end{split}
\]
for all $\varepsilon_i = \pm 1$ and $T_i \subseteq X_n$, $i=1,2,3$.

The spherical character on $\CL(n) \times\CL(n) \times\CL(n)$ associated with the representation $\chi_A \boxtimes \rho_n \boxtimes \rho_n$ for $A \subseteq X_n$, is given by 
\[
\begin{split}
\psi_{\chi_A \boxtimes \rho_n \boxtimes \rho_n}(\varepsilon_1\gamma_{T_1}&,\varepsilon_2\gamma_{T_2},\varepsilon_3\gamma_{T_3})  = 
\frac{1}{2^{n+1}} \sum_{\varepsilon = \pm 1} \sum_{D \subseteq X_n} \chi_A(\varepsilon \varepsilon_1 \gamma_D \gamma_{T_1}) \prod_{i=2}^3\chi_{\rho_n}(\varepsilon \varepsilon_i \gamma_D \gamma_{T_i})\\
& = \begin{cases}
\frac{1}{2^{n+1}} \sum_{\varepsilon = \pm 1} (-1)^{|T \cap T_1|} \prod_{i=2}^3(\varepsilon \varepsilon_i (-1)^{\xi(T,T)}2^{n/2}) & \mbox{ if } T_2 = T_3 =:T\\
0 & \mbox{ otherwise}
\end{cases}\\
& = \begin{cases}
(-1)^{|T \cap T_1|}\varepsilon_2\varepsilon_3 & \mbox{ if } T_2 = T_3 =:T\\
0 & \mbox{ otherwise}
\end{cases}
\end{split}
\]
for all $\varepsilon_i = \pm 1$ and $T_i \subseteq X_n$, $i=1,2,3$.

The spherical character on $\CL(n) \times\CL(n) \times\CL(n)$ associated with the representation $\chi_A \boxtimes \chi_B \boxtimes \chi_{\rho_n}$, for $A,B \subseteq X_n$, is again trivial.
Indeed,
\[
\begin{split}
\psi_{\chi_A \boxtimes \chi_B \boxtimes \rho_n}(\varepsilon_1\gamma_{T_1},\varepsilon_2\gamma_{T_2},\varepsilon_3\gamma_{T_3}) & = \frac{1}{2^{n+1}} \sum_{\varepsilon = \pm 1} \sum_{D \subseteq X_n} \chi_A(\varepsilon \varepsilon_1 \gamma_D\gamma_{T_1}) 
\chi_B(\varepsilon \varepsilon_2 \gamma_D\gamma_{T_2})\chi_{\rho_n}(\varepsilon \varepsilon_3 \gamma_D \gamma_{T_3})\\ 
& = \frac{1}{2^{n+1}} \sum_{\varepsilon = \pm 1} \varepsilon \left(\chi_A(\gamma_{T_3}\gamma_{T_1}) 
\chi_B(\gamma_{T_3}\gamma_{T_2})(\varepsilon_3 (-1)^{\xi(T_3,T_3)}2^{n/2})\right)\\ 
& = 0
\end{split}
\]
for all $\varepsilon_i = \pm 1$ and $T_i \subseteq X_n$, $i=1,2,3$.

Suppose now that $n$ is odd. 
The spherical character on $\CL(n) \times\CL(n) \times\CL(n)$ associated with the representations $\rho_n^{\eta_1} \boxtimes \rho_n^{\eta_2} \boxtimes \rho_n^{\eta_3}$ is trivial for all $\eta_i = \pm$, $i=1,2,3$.
Indeed,
\[
\begin{split}
\psi_{\rho_n^{\eta_1} \boxtimes \rho_n^{\eta_2} \boxtimes \rho_n^{\eta_3}}(\varepsilon_1\gamma_{T_1},\varepsilon_2\gamma_{T_2},\varepsilon_3\gamma_{T_3}) & = 
\frac{1}{2^{n+1}} \sum_{\varepsilon = \pm 1} \sum_{D \subseteq X_n} \prod_{i=1}^3\chi_{\rho_n^{\eta_i}}(\varepsilon \varepsilon_i \gamma_D \gamma_{T_i})\\
& = \frac{1}{2^{n+1}} \sum_{\varepsilon = \pm 1} \varepsilon \left(\sum_{D \subseteq X_n}\prod_{i=1}^3\chi_{\rho_n^{\eta_1}}(\varepsilon_i \gamma_D \gamma_{T_i})\right)\\
& = 0
\end{split}
\]
for all $\varepsilon_i = \pm 1$ and $T_i \subseteq X_n$, $i=1,2,3$.

In order to express the spherical character on $\CL(n) \times\CL(n) \times\CL(n)$ associated with the representations $\chi_A \boxtimes \rho_n^{\eta_2} \boxtimes \rho_n^{\eta_3}$ for $A \subseteq X_n$ and $\eta_i = \pm$, $i=2,3$, let
$\varepsilon_i = \pm 1$ and $T_i \subseteq X_n$, $i=1,2,3$. Then we have
\[
\begin{split}
\psi_{\chi_A \boxtimes \rho_n^{\eta_2} \boxtimes \rho_n^{\eta_3}} (\varepsilon_1\gamma_{T_1},\varepsilon_2\gamma_{T_2},\varepsilon_3\gamma_{T_3}) & =
\frac{1}{2^{n+1}} \sum_{\varepsilon = \pm 1} \sum_{D \subseteq X_n} \chi_A(\varepsilon \varepsilon_1 \gamma_D \gamma_{T_1})\prod_{i=2}^3\chi_{\rho_n^{\eta_i}}(\varepsilon \varepsilon_i \gamma_D \gamma_{T_i})\\
& = \frac{1}{2^{n}}\sum_{D \subseteq X_n} \chi_A(\gamma_{D \triangle T_1})\prod_{i=2}^3 \varepsilon_i\chi_{\rho_n^{\eta_i}}((-1)^{\xi(D,T_i)}\gamma_{D \triangle T_i}).
\end{split}
\]
%\begin{comment}
This quantity vanishes if $T_2 \neq T_3, \overline{T_3}$ while,
if $T_2 = T_3 := T$ equals
\[
\begin{split}
\frac{1}{2^{n}}(\chi_A(\gamma_{T \triangle T_1})  &  \varepsilon_2\varepsilon_3 \chi_{\rho_n^{\eta_2}}
((-1)^{\xi(T,T)}\gamma_{T \triangle T})\chi_{\rho_n^{\eta_3}}((-1)^{\xi(T,T)}\gamma_{T \triangle T})\\ 
& + \chi_A(\gamma_{\overline{T} \triangle T_1}) \varepsilon_2\varepsilon_3 \chi_{\rho_n^{\eta_2}}((-1)^{\xi(\overline{T},T)}\gamma_{\overline{T} \triangle T})\chi_{\rho_n^{\eta_3}}((-1)^{\xi(\overline{T},T)}\gamma_{\overline{T} \triangle T}))\\
& = 
2^{n}\varepsilon_2\varepsilon_3\left((-1)^{\vert A \cap (T \triangle T_1)\vert}  
+ c^2\eta_2\eta_3(-1)^{\vert A \cap (\overline{T} \triangle T_1)\vert}\right) 
\end{split}
\]
%\end{comment}
and, finally, if $T_2 = \overline{T_3} := T$, equals
%\begin{comment}
\[
\begin{split}
\frac{1}{2^{n}}(\chi_A(\gamma_{T \triangle T_1}) & \varepsilon_2\varepsilon_3 \chi_{\rho_n^{\eta_2}}((-1)^{\xi(T,T)}\gamma_{T \triangle T})\chi_{\rho_n^{\eta_3}}((-1)^{\xi(T,\overline{T})}\gamma_{T \triangle \overline{T}}) \\ 
&  +  \chi_A(\gamma_{\overline{T} \triangle T_1}) \varepsilon_2\varepsilon_3 \chi_{\rho_n^{\eta_2}}((-1)^{\xi(\overline{T},T)}\gamma_{\overline{T} \triangle T})\chi_{\rho_n^{\eta_3}}((-1)^{\xi(\overline{T},\overline{T})}\gamma_{\overline{T} \triangle \overline{T}}))\\
& = 
2^{n}c\varepsilon_2\varepsilon_3\left(\eta_3(-1)^{\vert A \cap (T \triangle T_1)\vert + \xi(T,T)+ \xi(T,\overline{T})}  
+ \eta_2(-1)^{\vert A \cap (\overline{T} \triangle T_1)\vert + \xi(T,\overline{T})+ \xi(\overline{T},\overline{T})}\right).  
\end{split}
\]

As special cases (essentially the only ones for which we have a rather simple expression) we have
\[
\psi_{\chi_A \boxtimes \rho_n^{\eta_2} \boxtimes \rho_n^{\eta_3}} (\varepsilon_1\gamma_{T},\varepsilon_2\gamma_{T},\varepsilon_3\gamma_{T}) =
2^n \varepsilon_2\varepsilon_3\left(1+(-1)^{\vert A \vert}\eta_2\eta_3 c^2\right)
\]
and 
\[
\psi_{\chi_A \boxtimes \rho_n^{\eta_2} \boxtimes \rho_n^{\eta_3}} (\varepsilon_1\gamma_{\overline{T}},\varepsilon_2\gamma_{T},\varepsilon_3\gamma_{T}) =
2^n \varepsilon_2\varepsilon_3(-1)^{\vert A \vert}
\]

%\end{comment}
Finally, the spherical character on $\CL(n) \times\CL(n) \times\CL(n)$ associated with the representations $\chi_A \boxtimes \chi_B \boxtimes \rho_n^{\pm}$ is again trivial for all $A,B \subseteq X_n$.
Indeed,
\[
\begin{split}
\psi_{\chi_A \boxtimes \chi_B \boxtimes \rho_n^\pm}(\varepsilon_1\gamma_{T_1},\varepsilon_2\gamma_{T_2},\varepsilon_3\gamma_{T_3}) & = 
\frac{1}{2^{n+1}} \sum_{\varepsilon = \pm 1} \sum_{D \subseteq X_n} \chi_A(\varepsilon \varepsilon_1 \gamma_D \gamma_{T_1}) \chi_B(\varepsilon \varepsilon_2 \gamma_D \gamma_{T_2}) \chi_{\rho_n^\pm}(\varepsilon \varepsilon_3 \gamma_D \gamma_{T_3})\\
& = \frac{1}{2^{n+1}} \sum_{\varepsilon = \pm 1} \varepsilon \left(\sum_{D \subseteq X_n}
\chi_A(\gamma_D \gamma_{T_1}) \chi_B(\gamma_D \gamma_{T_2})
\chi_{\rho_n^\pm}(\varepsilon_3 \gamma_D \gamma_{T_3})\right)\\
& = 0
\end{split}
\]
for all $\varepsilon_i = \pm 1$ and $T_i \subseteq X_n$, $i=1,2,3$.

%%%%%%%%%%%%%%%%%%%%%%%%%%%%%%%%%%%%%%%%%%%%%%%%%%%%%
%%%%%%%%%%%%%%%%%%%%%%%%%%%%%%%%%%%%%%%%%%%%%%%%%%%%%

\end{document}